\newcommand{\R}{{\mathbb R}}
\newcommand{\C}{{\mathbb C}}
\newcommand{\N}{{\mathbb N}}
\newcommand{\Z}{{\mathbb Z}}
\newtheorem {thm}{Theorem}[section]
\newtheorem {lem}[thm]{Lemma}
\newtheorem {cor}[thm]{Corollary}
 \newtheorem {df}[thm]{Definition}
 \newtheorem{rem}[thm]{Remark}
 \newtheorem{ex}[thm]{Example}
\def\E{\operatorname{\mathbb E}}
\def\P{\operatorname{\mathbb P}}
\def\SS{{\mathbb S}}
\def\lbl{\label}
\def\be{\begin{equation}}
\def\ee{\end{equation}}
\def\p{\partial}
\newcommand{\1}{{i\mkern1mu}}
\title{The mean field analysis for the Kuramoto model on graphs I.\\
The mean field equation and transition point formulas}
\author{Hayato Chiba\thanks{Institute of Mathematics for Industry, 
Kyushu University / JST PRESTO, Fukuoka,
819-0395, Japan, {\tt chiba@imi.kyushu-u.ac.jp}}\;
and Georgi S. Medvedev\thanks{Department of Mathematics, 
Drexel University, 3141 Chestnut Street, Philadelphia, PA 19104,
{\tt medvedev@drexel.edu}} 
}
\begin{document}
\maketitle

\begin{abstract}
In his classical work on synchronization, Kuramoto derived the formula
for the critical value of the coupling strength corresponding to the transition
to synchrony in large ensembles of all-to-all coupled phase oscillators with
randomly distributed intrinsic frequencies.  We extend the Kuramoto's result 
to a large class of coupled systems on convergent families of deterministic and
random graphs. Specifically, we identify the critical values of the coupling strength
(transition points), between which  the incoherent state is linearly stable and 
 is unstable otherwise. We show that the transition points depend on
 the largest positive or/and smallest negative eigenvalue(s) of the  kernel 
operator defined by  the graph limit.
This reveals the precise mechanism, by which the network topology
controls transition to synchrony in the Kuramoto model on graphs. To illustrate
the analysis with concrete examples, we derive the transition point formula for
the coupled systems on Erd\H{o}s-R{\' e}nyi, small-world, and $k$-nearest-neighbor
families of graphs. 
As a result of independent interest,
we provide a rigorous justification for the mean field limit for the Kuramoto model
on graphs. The latter is used in the derivation of the transition point formulas.
\end{abstract}

\section{Introduction}
\setcounter{equation}{0}
Synchronization of coupled oscillators is a classical problem of nonlinear
science with diverse applications in science and engineering \cite{Blekh71, Str-Sync}. 
Physical and technological applications of synchronization include
power, sensor, and communication networks \cite{DorBul12}, 
mobile agents \cite{RenBeard05}, electrical circuits \cite{AVR86}, 
coupled lasers \cite{KLNAL95}, and Josephson junctions \cite{WatStr94}, to name a few. 
In biological and social sciences, synchronization is studied in the context 
of flocking, opinion dynamics, and voting \cite{HenOls16, MotTad14}. 
Synchronization plays a prominent role in physiology and in neurophysiology, in particular.
It is important in the information processing in the brain \cite{Singer93} and 
in the mechanisms of several severe neurodegenerative diseases such as epilepsy 
\cite{TraWhi10} and Parkinson’s Disease \cite{LevHut00}. This list can be continued.

Identifying common principles underlying synchronization in such diverse models is a 
challenging task. In seventies Kuramoto found an elegant approach to this problem. Motivated
by problems in statistical physics and biology, he reduced a system of weakly coupled limit
cycle oscillators to the system of equations for the phase variables only\footnote{For related 
reductions predating Kuramoto's work, see \cite{Malkin49, Malkin56, Blekh71} and the 
discussion before 
Theorem~9.2 in \cite{HopIzh97}.}. The resultant equation is called the Kuramoto model 
(KM) \cite{Kur75}. Kuramoto's method applies directly to a broad class of models in 
natural science. Moreover, it provides a paradigm for studying synchronization. 
The analysis of the KM revealed one of the most striking results of the theory of 
synchronization. For a system of coupled oscillators with randomly distributed
intrinsic frequencies, Kuramoto identified the critical value of the coupling strength, 
at which the gradual buildup of coherence begins. 
He introduced the order parameter, which describes the degree of coherence 
in a coupled system. Using the order parameter, Kuramoto predicted the bifurcation
marking the onset of synchronization. 

Kuramoto's analysis, while not mathematically
rigorous, is based on the correct intuition for the transition to synchronization. 
His discovery initiated a line of fine 
research (see \cite{StrMir91, StrMir92, Str00, Chi15a} and references therein).
It was shown in \cite{StrMir91, StrMir92} that the onset of synchronization corresponds to 
the loss of stability of the incoherent state, a steady state solution of the mean field equation.  
The latter
is a nonlinear hyperbolic partial differential equation for the probability density function
describing the distribution of phases on the unit circle at a given time. The bifurcation 
analysis of the mean field equation is complicated by the presence of the continuous spectrum
of the linearized problem on the imaginary axis. To overcome this problem, in \cite{Chi15a}
Chiba developed an analytical method, which uses the theory of generalized functions 
and rigged Hilbert spaces \cite{GelVilv4}. 

The Kuramoto's result and the analysis in 
\cite{StrMir91, StrMir92,Chi15a} deal with all-to-all coupled systems.
Real world applications feature complex and often random connectivity patterns \cite{PG16}. 
The goal of our work is to describe the onset of synchronization
in the KM on graphs. To this end, we follow the approach in \cite{Med14a, Med14b}. 
Specifically, we consider the KM on convergent families of simple and weighted graphs,
for which  we derive and rigorously justify the mean field limit. Our framework covers 
many random graphs widely used in applications, including Erd\H{o}s-R{\' e}nyi and
small-world graphs. With the mean field equation in hand, we derive the transition point
formulas for the critical values of the coupling strength, where the incoherent state 
loses stability. Thus, we identify the region of linear stability of the incoherent state.
In the follow-up work, we will show that in this region the incoherent state is asymptotically
stable (albeit in weak topology), analyze the bifurcations at the transition points, and 
develop the center manifold reduction.

% The Kuramoto model (KM) has been used to identify common principles underlying 
% synchronization in diverse systems of coupled oscillators \cite{Kur75}. 
% The KM is derived from a system of weakly coupled limit cycle oscillators via 
% phase  reduction \cite{Kur75,HopIzh97}. 
The original KM with all-to-all coupling and random intrinsic frequencies has the 
following form:
\be\lbl{classKM}
\dot\theta_i=\omega_i+{K\over n} \sum_{j=1}^n \sin(\theta_j-\theta_i).
\ee
Here, $\theta_i:\R\to\SS:=\R/2\pi\Z,\, i\in [n]:=\{1,2,\dots, n\}$ is 
the phase of the oscillator~$i$, whose 
intrinsic frequency $\omega_i$ is drawn from the probability distribution with density 
$g(\omega)$, $n$ is the number of oscillators, and $K$ is the strength of coupling. The sum on the 
right-hand side of \eqref{classKM} describes the interactions of the oscillators in the 
network. The goal is to describe the distribution of $\theta_i(t), \; i\in [n],$ for large times and $n\gg 1.$

Since the intrinsic frequencies are random, for small values of the coupling strength 
$K>0$, the dynamics of different oscillators in the network are practically uncorrelated. 
For increasing values of $K>0,$ however, the dynamics of the oscillators becomes more and
more synchronized. To describe the degree of synchronization, Kuramoto used
the complex \textit{order parameter}:
\be\lbl{order}
r(t)e^{\1\psi (t)}:=n^{-1}\sum_{j=1}^n e^{\1 \theta_j(t)}.
\ee
Here, $0\le r(t)\le 1$ and $\psi(t)$ stand for the modulus and the argument of the order parameter defined
by the right-hand side of \eqref{order}. Note that if all phases are independent uniform random 
variables and $n\gg 1$ then with probability $1$, $r=o(1)$ by the Strong Law of Large Numbers.
If, on the other hand, all phase variables are equal then $r=1$. Thus, one can interpret the value of 
$r$ as the measure of coherence in the system dynamics. Numerical experiments with the 
KM (with normally distributed frequencies $\omega_i$'s) reveal the phase transition
at a certain critical value of the coupling strength $K_c>0.$ Specifically, numerics suggest that 
for $t\gg 1$ (cf.~\cite{Str00})
$$
r(t)= \left\{ \begin{array}{ll} O(n^{-1/2}),& 0<K<K_c,\\
r_\infty (K)+O(n^{-1/2}), & K>K_c.
\end{array}
\right.
$$
Assuming that $g$ is a smooth even function that is  decreasing  on  $\omega\in \R^+,$ 
Kuramoto derived the formula for the critical value 
\be\lbl{Kc-Kuramoto}
K_c={2\over \pi g(0)}.
\ee
Furthermore, he formally showed that in the partially synchronized regime ($K>K_c$),
the steady-state value for the order parameter is given by 
\begin{equation}
\label{partialS}
r_\infty(K)= \sqrt{\frac{-16}{\pi K_{c}^4 g''(0)}}\sqrt{K - K_{c}} + O(K-K_{c}).
\end{equation}
Recently, Chiba and Nishikawa \cite{ChiNis2011} and
Chiba \cite{ Chi15a} confirmed Kuramoto's heuristic analysis
with the rigorous derivation of \eqref{Kc-Kuramoto} and analyzed the
bifurcation 
at $K_c$.

In this paper, we initiate a mathematical investigation of  the transition to coherence 
in the Kuramoto model on graphs. To this end, we consider the following model:
\be\lbl{wKM-int}
\dot \theta_i=\omega_i +{K\over n} \sum_{j=1}^n W_{nij} \sin (\theta_j-\theta_i),
\ee
$W_n=(W_{nij})$  is  an  $n\times n$ symmetric matrix of weights.
Note that in the classical KM \eqref{classKM}, every oscillator is coupled to every other 
oscillator in the network,
i.e., the graph describing the interactions between the oscillators is the complete graph on 
$n$ nodes. In the modified model \eqref{wKM-int}, we supply the edges of the complete graph
with the weights $(W_{nij})$. Using this framework, we can study 
the KM on a variety of deterministic and random (weighted) graphs.
For instance, let $W_{nij}, \; 1\le i<j\le n$ be independent Bernoulli random variables
$$
\P(W_{nij}=1) =p,
$$
for some $p\in (0,1)$. Complete the definition of $W_n$ by setting $W_{nji}=W_{nij}$ 
and $W_{nii}=0, \, i\in [n].$ With this choice of $W_n,$ \eqref{wKM-int} yields the KM on 
Erd\H{o}s-R{\' e}nyi random graph.  

The family of Erd\H{o}s-R{\' e}nyi graphs parameterized by $n$ is one example 
of a convergent family of random graphs \cite{LovGraphLim12}. The limiting behavior
of such families is determined by a symmetric measurable function on the unit square
$W(x,y)$, called a graphon. In the case of the Erd\H{o}s-R{\' e}nyi graphs, 
the limiting graphon is the constant function $W\equiv p$.
% We will specify the assumptions on the graphons considered in this paper and their
% relation to the weight matrices $(W_n)$ below. 
In this paper, we study the KM on convergent families of deterministic and random graphs.
In each case the asymptotic properties of graphs are known through the limiting graphon
$W$. The precise relation between the graphon $W$ and the weight matrix $W_n$ will be 
explained below.

In studies of coupled systems on graphs, one of the main
questions is the relation between the structure of the graph and network dynamics. 
For the problem at hand, this translates into the question
of how the structure of the graph affects the transition to synchrony in the KM.
For the KM on convergent families of graphs, in this paper, we derive the formulas for 
the critical 
values
\be\lbl{wKc}
K_c^+={2\over \pi g(0) \zeta_{max}(\mathcal{W})}\quad\mbox{and}\quad
K_c^-={2\over \pi g(0) \zeta_{min}(\mathcal{W})},
\ee
where $\zeta_{max}(\mathcal{W})\; \left(\zeta_{min}(\mathcal{W}) \right)$ is the largest
positive (smallest negative)  eigenvalue of the self-adjoint kernel operator 
$\mathcal{W}:L^2(I)\to L^2(I), \, I:=[0,1],$ defined by 
\be\lbl{def-W2}
\mathcal{W}[f]=\int_I W(\cdot,y)f(y)dy,\quad f\in L^2(I).
\ee
If all eigenvalues of $\mathcal{W}$ are positive (negative) then 
$K_c^-:=-\infty$ ($K_c^+:=\infty$). 
The main result of this work shows that the incoherent state is linearly (neutrally) stable
for $K\in [K^-_c,K_c^+]$ and is unstable otherwise.
The transition point formulas in \eqref{wKc} reveal the effect of the network topology 
on the synchronization properties of the KM through the extreme eigenvalues 
$\zeta_{max}(\mathcal{W})$ and $\zeta_{min}(\mathcal{W})$. 
For the classical KM ($W\equiv 1$)
$\zeta_{max}(\mathcal{W})=1$ and there are no negative eigenvalues. 
Thus, we recover \eqref{Kc-Kuramoto} from \eqref{wKc}.
 
We derive \eqref{wKc} from the linear stability analysis of the mean field limit of \eqref{wKM-int}.
The latter is a partial differential equation for the probability density function corresponding 
to the distribution of the phase variables on the unit circle (see \eqref{MF}, \eqref{def-V}). 
For the classical KM, the mean field
limit was derived by Strogatz and Mirollo in \cite{StrMir91}.
% and rigorously justified by  Lancellotti in \cite{Lan05}. 
We derive the mean field limit for the KM on weighted graphs \eqref{wKM-int}
and show that its solutions  approximate probability distribution of the phase variables on finite 
time intervals for $n\gg 1$. Here,  we rely on the theory of Neunzert
developed for the Vlasov equation \cite{Neu78, Neu84} (see also \cite{BraHep77, Dob79, Gol16}), 
which was also  used by  Lancellotti in 
his treatment of the mean field limit for the classical KM \cite{Lan05}. 

With the mean field limit in hand, we proceed to study transition to coherence in \eqref{wKM-int}.
As for the classical  KM, the density of the uniform distribution is a steady state solution of the 
mean field limit. The linear stability analysis in Section~\ref{sec.stability} shows that the density 
of the uniform distribution is neutrally stable for $K_c^-\le K\le
K^+_c$ and is unstable otherwise. Thus, the 
critical values $K^{\pm}_c$ given in \eqref{wKc} mark the loss of
stability of the incoherent state.
The bifurcations at $K^{\pm}_c$ and the formula for the order
parameter corresponding 
to (\ref{partialS})
will be analyzed elsewhere using the techniques from 
\cite{Chi15a, Chi15b}. 

Sections~\ref{sec.approximate} and \ref{sec.examples} deal with applications. In the former
section we collect approximation results, which facilitate application of our results to a wider 
class of models. Further, in Section~\ref{sec.examples}, we discuss the KM for several representative
network topologies : Erd\H{o}s-R{\' e}nyi, small-world, $k$-nearest-neighbor graphs, and
the weighted ring model. We conclude with a brief discussion of our results in
Section~\ref{sec.discuss}.

%%%%%%%%%%%%%%%%%%%%%%%%%%%

\section{The mean field limit}\lbl{sec.MF}
\setcounter{equation}{0}
Throughout this paper, we will use  a discretization of $I=[0,1]:$
\be\lbl{Xn}
X_n=\{\xi_{n1}, \xi_{n2},\dots,\xi_{nn}\},\quad \xi_{ni}\in I, \; i\in [n],
\ee
which satisfies the following property
\be\lbl{wXn}
\lim_{n\to\infty} n^{-1}\sum_{i=1}^n f(\xi_{ni})= \int_I f(x)dx, \quad
\forall f\in C(I).
\ee
\begin{ex} 
The following two examples of $X_n$ will be used in constructions of various graphs
throughout this paper.
\begin{enumerate} 
\item The family of sets \eqref{Xn} with $\xi_{ni}=i/n,\; i\in [n]$ satisfies \eqref{wXn}.
\item Let $\xi_1, \xi_2, \dots $ be independent identically distributed (IID) random variables (RVs)
with $\xi_1$ having the uniform distribution on $I$. Then with $X_n=\{\xi_1, \xi_2,\dots, \xi_n\}$ 
\eqref{wXn} holds almost surely (a.s.), by the Strong Law of Large Numbers.
\end{enumerate}
\end{ex}

Let $W$ be a symmetric Lipschitz continuous function on $I^2:$
\be\lbl{Lip-W}
\left| W(x_1,y_1)- W(x_2,y_2)\right|\le L_W \sqrt{ (x_1-x_2)^2 + (y_1-y_2)^2}\quad
\forall (x_{1,2},y_{1,2})\in I^2.
\ee

The weighted graph $\Gamma_n= G(W, X_n)$ on $n$ nodes
is defined as follows. The node and the edge sets of $\Gamma_n$ are
  $V(\Gamma_n)=[n]$ and 
\be\lbl{EGamma}
E(\Gamma_n)=\left\{ \{i,j\}:\;  W(\xi_{ni},\xi_{nj})\neq 0,\; i,j\in [n]\right\},
\ee
respectively. Each edge $\{i,j\}\in E(\Gamma_n)$ is supplied with the weight 
$W_{nij}:=W(\xi_{ni}, \xi_{nj}).$

On $\Gamma_n$, we consider the KM
of phase oscillators
\be\lbl{KM}
\dot \theta_{ni} = \omega_{i} + Kn^{-1} \sum_{j=1}^n W_{nij} \sin(\theta_{nj}-\theta_{ni}),
\quad i\in [n].
\ee
The phase  variable $\theta_{ni}: \R\to\SS:=\R/ 2\pi\Z$  
corresponds to the oscillator at node $i\in [n].$ Throughout this paper,
we identify $\theta\in\SS$ with its value in the fundamental domain, i.e.,
$\theta\in [0, 2\pi)$. Further, we equip $\SS$ with the distance
\be\lbl{S-distance}
d_\SS(\theta,\theta^\prime)=\min \{|\theta-\theta^\prime|, 2\pi-|\theta-\theta^\prime|\}.
\ee

The oscillators at the adjacent nodes
interact through the coupling term on the right hand side of \eqref{KM}.
The intrinsic frequencies $\omega_1, \omega_2, \dots$ are 
IID RVs.
Assume that $\omega_1$
has absolutely continuous probability distribution with a continuous density
$g(\omega)$. 
The initial condition
\be\lbl{KM-ic}
\theta_{ni}(0)=\theta_i^0, \quad i\in [n],
\ee
are sampled independently from 
the conditional probability distributions with 
densities $\hat\rho^0_{\theta|\omega}(\theta, \omega_i, \xi_{ni}), \;
i\in [n]$. Here, $\hat \rho^0_{\theta|\omega}(\theta,\omega,\xi)$ is a nonnegative
continuous function on $G:=\SS\times \R\times I$  that is uniformly
continuous in $\xi$, i.e., $\forall \epsilon >0 \;\exists \delta>0$
such that
\be\lbl{uniform-xi}
  (\xi_1,\xi_2\in I)\;\&\;  |\xi_1-\xi_2|<\delta \;\implies\;
\left|\hat \rho^0_{\theta|\omega}(\theta,\omega,\xi_1)-
\hat \rho^0_{\theta|\omega}(\theta,\omega,\xi_2)\right|
<\epsilon 
\ee
uniformly in $(\theta,\omega)\in \SS\times\R$.
In addition, we assume 
\be\lbl{condP-ic}
\int_\SS \hat\rho^0_{\theta|\omega} (\phi, \omega, \xi)d\phi =1\quad
\forall (\omega, \xi)\in\R\times I.
\ee

We want to show that the dynamics of \eqref{KM} subject to the initial condition 
\eqref{KM-ic} can be described in terms of the probability density function 
$\hat\rho(t,\theta,\omega,x)$ satisfying
the following Vlasov equation 
\be\lbl{MF}
{\p \over \p t} \hat\rho(t,\theta,\omega,x) +{\p \over \p \theta }\left\{ \hat\rho(t,\theta,\omega,x) V(t,\theta,\omega,x)\right\}
=0,
\ee
where
\be\lbl{def-V}
V(t,\theta,\omega,x)=\omega+
K\int_I\int_\R\int_\SS W(x,y) \sin(\phi - \theta ) \hat\rho(t,\phi,\lambda,y) 
d\phi d\lambda d y
\ee
and the initial condition 
\be\lbl{MF-ic}
\hat\rho(0,\theta,\omega,x)=\hat\rho_{\theta|\omega}^0 (\theta,\omega, x)g(\omega).
\ee
By \eqref{condP-ic}, $\hat\rho(0,\theta,\omega,x)$ is a probability
density on $(G,\mathcal{B}(G))$:
\begin{equation}\lbl{check-density}
\begin{split}
\int_\SS\int_\R\int_I \hat\rho(0,\theta,\omega,x) dx d\omega d\theta 
&=\int_\R\int_I\left\{ \int_\SS \hat\rho^0_{\theta|\omega} 
(\theta,\omega, x)d\theta\right\} g(\omega)  dx d\omega \\
&=\int_\R g(\omega)  d\omega =1.
 \end{split}
\end{equation}
Here, $\mathcal{B}(G)$ stands for the Borel $\sigma$-algebra of $G$.

Below, we show that the solutions of the  IVPs for \eqref{KM} and \eqref{MF}, 
generate two families of Borel probability measures parametrized by $t>0$.
To this end, we introduce the following
empirical measure
\be\lbl{EM2}
\mu_t^n(A)=n^{-1} \sum_{i=1}^n \delta_{P_{ni}(t)} (A) \quad \; A\in\mathcal{B}(G),
\ee
where  $P_{ni}(t)=(\theta_{ni}(t), \omega_i, \xi_{ni})\in G.$ 

To compare measures generated by the discrete and continuous systems,
following \cite{Neu84}, we  use the bounded Lipschitz distance:
\be\lbl{Lip-d}
d(\mu,\nu)=\sup_{f\in\mathcal{L}} \left| \int_G f d\mu -\int_G fd\nu\right|,
\quad \mu,\nu\in \mathcal{M},
\ee
where $\mathcal{L}$ is the set of functions
\be\lbl{def-L}
\mathcal{L}=\left\{ f: G\to [0,1]:\; |f(P)-f(Q)|\le  d_G (P,Q),\; P,Q\in G\right\}
\ee
and
$\mathcal{M}$ stands for the space of Borel probability measures on $G$.
Here,
$$
d_G(P,P^\prime)=\sqrt{ d_\SS(\theta,\theta^\prime)^2+(\omega-\omega^\prime)^2+
(x-x^\prime)^2},
$$
for $P=(\theta, \omega, x)$ and $P^\prime=(\theta^\prime, \omega^\prime, x^\prime)$.
The bounded Lipschitz distance metrizes the convergence of Borel probability 
measures on $G$ \cite[Theorem~11.3.3]{Dud02}.

We are now in a position to formulate the main result of this section.
\begin{thm}\lbl{thm.converge}
Suppose $W$ is a Lipschitz continuous function on $I^2$. 
Then  for any $T>0$, there exists a unique weak solution\footnote{See \cite[Remark~1]{Neu78} 
for the definition of the weak solution.}
of the IVP \eqref{MF}, \eqref{def-V}, and \eqref{MF-ic}, $\hat\rho(t,\cdot), \;t\in[0,T]$, 
which provides the density for Borel probability measure on $G$:
\be\lbl{mu}
\mu_t (A) = \int_A \hat\rho(t,P) dP,\quad A\in\mathcal{B}(G), 
\ee
parametrized by $t\in [0,T]$.
Furthermore, 
\be\lbl{pointwise}
d(\mu_t^n,\mu_t)\to 0
\ee
uniformly for $t\in [0,T],$ provided $d(\mu^n_0,\mu_0)\to 0$ as 
$n\to\infty.$ 
\end{thm}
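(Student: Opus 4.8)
The plan is to follow Neunzert's method of characteristics for the Vlasov equation, reducing both assertions of Theorem~\ref{thm.converge} to the well-posedness and the stability of a self-consistent transport flow on the space $\mathcal{M}$ of Borel probability measures on $G$. Equation \eqref{MF}--\eqref{def-V} is a continuity equation whose (nonlocal) velocity field depends on the unknown measure: for $\mu\in\mathcal{M}$ set
\be
V[\mu](\theta,\omega,x)=\omega+K\int_G W(x,y)\sin(\phi-\theta)\,d\mu(\phi,\lambda,y).
\ee
Since $\omega$ and $x$ are frozen along characteristics, the characteristic system is $\dot\theta=V[\mu_t](\theta,\omega,x)$, and the solution of the Vlasov equation with initial datum $\mu_0$ is the push-forward $\mu_t=\Phi^t_{\mu_\cdot}\#\mu_0$, where $\Phi^t_{\mu_\cdot}$ is the flow of this field. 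The key bookkeeping remark is that the discrete system \eqref{KM} is exactly the characteristic system of the empirical measure \eqref{EM2}: evaluating $V[\mu^n_t]$ at $P_{ni}(t)=(\theta_{ni}(t),\omega_i,\xi_{ni})$ reproduces the right-hand side of \eqref{KM}, because $\int_G W(\xi_{ni},y)\sin(\phi-\theta_{ni})\,d\mu^n_t=n^{-1}\sum_j W(\xi_{ni},\xi_{nj})\sin(\theta_{nj}-\theta_{ni})$. Hence $\mu^n_t=\Phi^t_{\mu^n_\cdot}\#\mu^n_0$ is itself a measure solution of the Vlasov equation, and \eqref{pointwise} becomes a continuous-dependence-on-initial-data statement for that equation.

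Everything then rests on two Lipschitz estimates for $V$, both consequences of \eqref{Lip-W} (which, on the compact $I^2$, also forces $\|W\|_\infty<\infty$) and the smoothness of $\sin$. First, for fixed $\mu$ the map $\theta\mapsto V[\mu](\theta,\omega,x)$ is Lipschitz with respect to $d_\SS$ with a constant $L_\theta=K\|W\|_\infty$ independent of $\mu,\omega,x$, and $V$ is in fact $C^\infty$ in $\theta$ with bounded derivatives. Second, the integrand $\psi_{\theta,x}(\phi,\lambda,y)=W(x,y)\sin(\phi-\theta)$ is bounded by $\|W\|_\infty$ and Lipschitz in $(\phi,\lambda,y)$ with constant controlled by $\|W\|_\infty+L_W$; testing against the class \eqref{def-L} (constants drop out against probability measures) yields $\|V[\mu]-V[\nu]\|_\infty\le C_2\,d(\mu,\nu)$ with $C_2=C_2(K,\|W\|_\infty,L_W)$ and $d$ the bounded Lipschitz distance \eqref{Lip-d}. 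The first estimate makes each flow $\Phi^t$ a $C^1$-diffeomorphism of $\SS$ in $\theta$ with $\mathrm{Lip}(\Phi^t)\le e^{L_\theta t}$ by Gronwall; the second controls the response of the flow to a change of the driving measure.

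For existence and uniqueness I would apply the Banach fixed point theorem to $\mathcal{T}:\nu_\cdot\mapsto(\Phi^t_{\nu_\cdot}\#\mu_0)_t$ on $C([0,T],\mathcal{M})$. Combining the push-forward bound $d(S\#\mu_0,\tilde S\#\mu_0)\le\sup_P d_G(SP,\tilde SP)$ with the Gronwall comparison of two flows driven by $\nu_\cdot$ and $\tilde\nu_\cdot$ gives $\sup_{t\le T}d\big((\mathcal{T}\nu)_t,(\mathcal{T}\tilde\nu)_t\big)\le C\int_0^T d(\nu_s,\tilde\nu_s)\,ds$, which becomes a contraction in the weighted norm $\sup_t e^{-\lambda t}d(\cdot,\cdot)$ for $\lambda$ large (equivalently by iterating over short intervals). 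The unique fixed point is the weak solution $\mu_\cdot$. Because $\Phi^t$ is a $C^1$-diffeomorphism in $\theta$, the push-forward of the absolutely continuous datum \eqref{MF-ic} stays absolutely continuous, and the change-of-variables (Liouville) formula produces the continuous density $\hat\rho(t,\cdot)$ in \eqref{mu}; conservation of total mass under the continuity equation together with \eqref{check-density} confirms that $\mu_t$ is a probability measure for each $t$.

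For the convergence I would establish the Dobrushin-type stability estimate $d(\mu_t,\tilde\mu_t)\le C e^{\beta t}d(\mu_0,\tilde\mu_0)$ for any two measure solutions. Writing $\mu_t=\Phi^t\#\mu_0$, $\tilde\mu_t=\tilde\Phi^t\#\tilde\mu_0$ and splitting $d(\mu_t,\tilde\mu_t)\le\sup_P d_G(\Phi^tP,\tilde\Phi^tP)+e^{L_\theta t}d(\mu_0,\tilde\mu_0)$, the first term $\delta(t)$ satisfies $\delta(t)\le\int_0^t\big(L_\theta\delta(s)+C_2\,d(\mu_s,\tilde\mu_s)\big)\,ds$ via the two velocity estimates, and closing this coupled pair of inequalities by Gronwall gives the claim. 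Taking $\tilde\mu_\cdot=\mu^n_\cdot$ yields $\sup_{t\in[0,T]}d(\mu^n_t,\mu_t)\le Ce^{\beta T}d(\mu^n_0,\mu_0)\to0$. I expect the main obstacle to be exactly this bookkeeping with the bounded Lipschitz metric: verifying that $V$ is Lipschitz in $\mu$ with respect to the comparatively weak $d$ (rather than a Wasserstein distance), and chaining the push-forward, flow-comparison and velocity estimates so that the coupled Gronwall inequality closes with constants uniform in $t\in[0,T]$. The non-compactness of the $\omega$-axis is harmless, since $\omega$ is frozen along characteristics and enters only through the globally $\theta$-Lipschitz field $V$.
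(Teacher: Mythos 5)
Your proposal is correct and takes essentially the same approach as the paper: both recast the problem in Neunzert's characteristics/fixed-point framework, note that the empirical measure generated by \eqref{KM} is itself a measure solution of \eqref{MF}, verify the two Lipschitz conditions on the velocity field (including the same normalized-integrand trick to test $V[\mu]-V[\nu]$ against the class $\mathcal{L}$), and conclude via the Dobrushin-type stability estimate. The only difference is packaging: where the paper cites Neunzert's results for existence/uniqueness of the fixed point, propagation of absolute continuity, and the stability bound, you re-derive them (Banach contraction in a weighted norm, Liouville change of variables, coupled Gronwall), which is precisely how the cited results are proved.
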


\begin{proof} 
We rewrite \eqref{KM} as follows
\begin{eqnarray}\lbl{extendKM}
\dot{\theta}_{ni} &=& \lambda_{i} + Kn^{-1} \sum_{j=1}^n W(x_{ni},x_{nj}) \sin (\theta_{nj}-
\theta_{ni}),\\
\nonumber
\dot \lambda_{ni}& =& 0,\\
\nonumber
\dot x_{ni} &=& 0,\quad i\in [n],
\end{eqnarray}
subject to the initial condition
\be\lbl{extendKM-ic}
(\theta_{ni}(0), \lambda_{ni}(0), x_{ni}(0))  =  (\theta_{i}^0, \omega_i,  \xi_{ni}), 
\quad i\in [n].
\ee
As before, we consider the empirical measure corresponding to the solutions of 
\eqref{extendKM},
\eqref{extendKM-ic}
\be\lbl{EM}
\mu_t^n(A)=n^{-1} \sum_{i=1}^n \delta_{ P_{ni}(t)} (A), \quad \; A\in\mathcal{B}(G),
\ee
where $P_{ni}(t)=(\theta_{ni}(t), \lambda_{ni}(t), x_{ni}(t))\in G.$

We need to show that $\mu_t^n$ and $\mu_t$
are close for large $n$. This follows from the Neunzert's theory \cite{Neu84}.
Specifically, below we show 
\be\lbl{Dobrushin-I}
d(\mu_t^n,\mu_t)\le C d(\mu^n_0,\mu_0), \quad t\in [0,T],
\ee
for some $C>0$ independent from $n$.

Below, we prove \eqref{Dobrushin-I}.
Theorem~\ref{thm.converge} will then follow.

Let $C(0,T;\mathcal{M})$ denote the space of weakly continuous 
$\mathcal{M}$-valued functions on $[0,T]$. Specifically, $\mu_.\in
C(0,T;\mathcal{M})$
means that 
\be\lbl{weak-cont}
t\mapsto \int_G f(P)d\mu_t(P)
\ee
is a continuous function of $t\in [0,T]$ for every bounded continuous
function $f\in C_b(G)$.

For a given $\nu_.\in C(0,T;\mathcal{M}),$
consider the following equation of characteristics:
\be\lbl{CE}
\frac{dP}{dt}=\tilde V[\nu_{.}](t,P),\quad P(s)=P^0\in G.
\ee
where $P=(\theta,\omega,x)$ and
\be\lbl{VF}
\tilde V[\nu_{.}](t,P)=\begin{pmatrix} \omega+K\int_G W(x,y) \sin(v-\theta)d\nu_{t}(v,\omega,y) \\
0 \\0
\end{pmatrix}.
\ee

Under our assumptions on $W$, \eqref{CE} has a unique global solution,
which depends continuously on initial data.
Thus, \eqref{VF} generates  the flow $T_{t,s}: G\to G$ 
($T_{s,s}=\mathrm{id}, T_{s,t}=T^{-1}_{t,s}$):
$$
P(t)=T_{t,s}[\nu_{.}]P^0.
$$ 
Following \cite{Neu78}, we consider the fixed point equation:
\be\lbl{FP}
\nu_t=\nu_0\circ T_{0,t}[\nu_{.}], \quad t\in [0,T],
\ee
which is interpreted as
$$
\nu_t(A)=\nu_0\left(T_{0,t}[\nu_{.}](A)\right)\quad \forall A\in \mathcal{B}(G).
$$ 
It is shown in \cite{Neu78} that under the conditions \textbf{(I)} and \textbf{(II)}
given below, for any $\nu_0\in \mathcal{M}$ there
is a unique solution of the fixed point equation \eqref{FP} $\nu_.\in C(0,T;\mathcal{M}).$
Moreover, for any two initial conditions $\nu^{1,2}_0\in \mathcal{M},$ we have
\be\lbl{CNT}
\sup_{t\in [0,T]} d(\nu_t^1,\nu_t^2)\le \exp\{CT\} d(\nu_0^1,\nu_0^2)
\ee
for some $C>0$.
By construction of $T_{t,s}$ and \eqref{EM},  the empirical measure $\mu_.^n$ satisfies
the fixed point equation \eqref{FP}. By \cite[Theorem~1]{Neu78}, $\nu_t$,
the solution of the \eqref{FP}, is
an absolutely continuous measure with density $\hat\rho (t,\cdot)$ for every $t\in [0,T]$,  
provided $\nu_0$ is absolutely continuous with density $\hat\rho(0,\cdot)$ 
(cf.~\eqref{check-density}). 
Furthermore, $\hat\rho (t,P)$ is 
a weak solution of the IVP for \eqref{MF}, \eqref{def-V}, and \eqref{MF-ic}.
Therefore, since both the empirical measure $\mu^n_.$ and its continuous counterpart 
$\mu_.$ (cf.~\eqref{EM} and \eqref{mu}) satisfy the fixed point equation \eqref{FP},
 we can use \eqref{CNT} to obtain \eqref{Dobrushin-I}. 
It remains to verify the following two conditions on the vector field $\tilde V[\nu_.]$,
which guarantee the solvability of \eqref{FP} and continuous dependence on initial
data estimate \eqref{CNT} (cf.~\cite{Neu84}):
\begin{description}
\item[(I)] $\tilde V[\mu_{.}](t,P)$ is continuous in $t$ and is globally Lipschitz continuous 
in $P$ with Lipschitz constant\footnote{A straightforward estimate shows
that $L_1=\sqrt{2}\left(L_W+\| W\|_{L^\infty (I^2)}\right)+1$, where $L_W$ is the Lipschitz
constant in \eqref{Lip-W}.} $L_1,$ which depends on $W$.
\item[(II)] The mapping $\tilde V: \mu_{.}\mapsto \tilde V[\mu_{.}]$ is Lipschitz continuous
in the following sense:
$$
 \left|\tilde V[\mu_{.}](t,P)-\tilde V[\nu_{.}](t,P)\right| \le L_2 d (\mu_t, \nu_t),
$$
for some $L_2>0$ and for all $\mu_{.},\nu_{.}\in C(\R,\mathcal{M})$ and 
$(t,P)\in [0,T]\times G$. \footnote{With these assumptions the estimate \eqref{CNT} 
holds with $C=L_1+L_2$ (see~\cite{Neu84}).}
\end{description}

For the Lipschitz continuous function $W$, it is straightforward to verify 
conditions \textbf{(I)} and \textbf{(II)}. In particular, \textbf{(I)} follows
from the weak continuity of $\mu_t$ (cf.~\eqref{weak-cont}) and Lipschitz 
continuity of $W$ and
$\sin x.$ The second condition is verified following the treatment  
of the mechanical system presented in \cite{Neu84} (see also \cite{Lan05}). 
We include the details of the verification of \textbf{(II)} for completeness.

Let $P=(\theta,\omega,x)\in G$ be arbitrary but fixed and define
\be\lbl{def-f}
f(\phi,\lambda,y; P)={W(x,y)\sin (\phi-\theta)+\|W\|_{L^\infty(I^2)} \over 2( \|W\|_{L^\infty(I^2)}+L_W) },
\ee
where  $L_W$ is the Lipschitz constant of $W(x,y)$ (cf. \eqref{Lip-W}). 
Then $f\in \mathcal{L}$ (cf.~\eqref{Lip-d}). Further,
\begin{equation*}
\begin{split}
\left|\tilde V[\nu_{.}](t,P)-\tilde V[\mu_{.}](t,P)\right| 
&= \left|K \int_G W(x,y) \sin (\phi-\theta)
\left( d \nu_t(\phi,\lambda,y)- d \mu_t(\phi,\lambda,y)\right)\right|\\
&= 2K(\|W\|_{L^\infty(I^2)}+L_W) \left| \int_G f(\phi,\lambda,y) 
\left( d \nu_t(\phi,\lambda,y)- d \mu_t(\phi,\lambda,y)\right)\right|\\
&\le  L_2 d(\nu_t,\mu_t), \quad L_2:=2K(\|W\|_{L^\infty(I^2)}+L_W),
\end{split}
\end{equation*}
which verifies the condition (II).
\end{proof}

\begin{cor}\lbl{cor.converge}
For the empirical measure $\mu^n_t$ \eqref{EM2} and absolutely continuous measure 
$\mu_t$ \eqref{mu}
defined on the solutions of the IVPs
\eqref{KM}, \eqref{KM-ic}, \eqref{condP-ic}  and 
\eqref{MF}, \eqref{def-V}, \eqref{MF-ic} respectively,  we have
$$
\lim_{n\to\infty}\sup_{t\in [0,T]} d(\mu^n_t,\mu_t)=0 \quad
\mbox{a.s.}.
$$
\end{cor}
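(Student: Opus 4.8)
The plan is to deduce Corollary~\ref{cor.converge} from Theorem~\ref{thm.converge} by verifying that the only remaining hypothesis, namely the convergence of the initial measures $d(\mu_0^n,\mu_0)\to 0$, holds almost surely under the stated sampling assumptions. Theorem~\ref{thm.converge} already supplies the deterministic implication: whenever $d(\mu_0^n,\mu_0)\to 0$, one has $\sup_{t\in[0,T]}d(\mu_t^n,\mu_t)\to 0$. So the entire content of the corollary is the probabilistic statement that the randomness in the intrinsic frequencies $\omega_i$, the nodes $\xi_{ni}$, and the initial phases $\theta_i^0$ does not spoil the convergence of the time-zero empirical measure to $\mu_0$ with density $\hat\rho^0_{\theta|\omega}(\theta,\omega,x)g(\omega)$.

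First I would fix the picture at $t=0$: the empirical measure is $\mu_0^n=n^{-1}\sum_{i=1}^n\delta_{P_{ni}(0)}$ with $P_{ni}(0)=(\theta_i^0,\omega_i,\xi_{ni})\in G$, where $\omega_i$ are IID with density $g$, the phases $\theta_i^0$ are drawn from the conditional densities $\hat\rho^0_{\theta|\omega}(\cdot,\omega_i,\xi_{ni})$, and $\xi_{ni}$ satisfies the discretization property \eqref{wXn} (deterministically in Example~2.1(1), almost surely in Example~2.1(2)). Since the bounded Lipschitz distance $d$ metrizes weak convergence of Borel probability measures on $G$ (as noted after \eqref{def-L}), it suffices to show $\mu_0^n\Rightarrow\mu_0$ weakly, a.s. Because $\mathcal{L}$ is a class of uniformly bounded, uniformly Lipschitz (hence equicontinuous) functions, weak convergence is equivalent to $\int_G f\,d\mu_0^n\to\int_G f\,d\mu_0$ for every $f\in C_b(G)$; I would establish this for each fixed such $f$ and then upgrade to convergence in the $d$-metric.

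The core step is a law-of-large-numbers argument for the triangular array $\{P_{ni}(0)\}$. For a fixed $f\in C_b(G)$ I would write $\int_G f\,d\mu_0^n=n^{-1}\sum_{i=1}^n f(\theta_i^0,\omega_i,\xi_{ni})$ and split the analysis into a mean term and a fluctuation term. Conditioning on $(\omega_i,\xi_{ni})$, the conditional expectation of $f(\theta_i^0,\omega_i,\xi_{ni})$ is $h(\omega_i,\xi_{ni}):=\int_\SS f(\phi,\omega_i,\xi_{ni})\hat\rho^0_{\theta|\omega}(\phi,\omega_i,\xi_{ni})\,d\phi$, which is bounded and (using the uniform continuity \eqref{uniform-xi} and continuity of the data) continuous in its arguments. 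The fluctuations $f(\theta_i^0,\omega_i,\xi_{ni})-h(\omega_i,\xi_{ni})$ are bounded, and conditionally independent across $i$; a variance bound plus Borel--Cantelli (or a standard strong law for bounded independent summands) gives $n^{-1}\sum_i\big(f(\theta_i^0,\omega_i,\xi_{ni})-h(\omega_i,\xi_{ni})\big)\to 0$ a.s. For the mean term, $n^{-1}\sum_i h(\omega_i,\xi_{ni})\to\int_I\int_\R h(\omega,x)g(\omega)\,d\omega\,dx$ a.s. by the strong law of large numbers applied in the $\omega$-variable together with \eqref{wXn} in the $x$-variable; the joint convergence follows since the $\omega_i$ are IID with density $g$ while the $\xi_{ni}$ realize the deterministic (or a.s.) integration rule \eqref{wXn}. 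Unwinding the definition of $h$ identifies the limit as $\int_G f\,d\mu_0$, i.e.\ the integral against the density $\hat\rho^0_{\theta|\omega}(\theta,\omega,x)g(\omega)$ of \eqref{MF-ic}.

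Finally I would pass from "for each fixed $f$, a.s." to "a.s., for all $f\in\mathcal{L}$ simultaneously," which is what the supremum in $d$ requires. The clean way is to avoid a nonseparable supremum over the uncountable class $\mathcal{L}$ by instead proving weak convergence $\mu_0^n\Rightarrow\mu_0$ along a fixed countable convergence-determining subset of $C_b(G)$ (possible since $G$ is separable): this is an a.s.\ statement for each of countably many $f$, hence jointly a.s., and it implies $\mu_0^n\Rightarrow\mu_0$ a.s.; the metrization property then yields $d(\mu_0^n,\mu_0)\to 0$ a.s.\ directly. The main obstacle is precisely this measure-theoretic bookkeeping of the null sets: one must ensure a single almost-sure event on which the convergence holds against all test functions at once, rather than a different null set for each $f$, and one must handle the triangular-array nature of the $\xi_{ni}$ (which change with $n$) consistently with the IID structure of the $\omega_i$ and the conditionally independent $\theta_i^0$. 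Once $d(\mu_0^n,\mu_0)\to 0$ a.s.\ is secured, the deterministic estimate \eqref{Dobrushin-I} from the proof of Theorem~\ref{thm.converge} transports the convergence to all $t\in[0,T]$ uniformly, completing the proof.
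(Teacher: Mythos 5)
Your proposal is correct and follows essentially the same route as the paper: reduce everything to the a.s.\ convergence $d(\mu_0^n,\mu_0)\to 0$ via Theorem~\ref{thm.converge}, establish that convergence by a strong law of large numbers for the triangular array $f(\theta_i^0,\omega_i,\xi_{ni})$ combined with the Riemann-sum property \eqref{wXn}, and control the uncountable supremum in the metric $d$ by working with a countable family of test functions on a single full-measure event. The only differences are cosmetic: the paper centers in one step using $\E\,f_m(\theta_i^0,\omega_i,\xi_{ni})=F_m(\xi_{ni})$ and uses a countable dense subset of $BL(G)$, whereas you condition first on $(\omega_i,\xi_{ni})$ and invoke a countable convergence-determining subclass of $C_b(G)$.
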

\begin{proof}
In view of Theorem~\ref{thm.converge}, we need to show
$$
\lim_{n\to\infty} d(\mu^n_0,\mu_0)=0 \quad
\mbox{a.s.}.
$$
By \cite[Theorem~11.3.3]{Dud02}, it is sufficient to show
\be\lbl{weak-c}
\lim_{n\to\infty}
\int_G f\left(d\mu_0^n-d\mu_0\right) =0 \quad
\forall f\in BL(G)\quad \mbox{a.s.},
\ee
where $BL(G)$ stands for the space of bounded real-valued Lipschitz functions on $G$
with the supremum norm.
Since $BL(G)$ is a separable space, let $\{f_m\}_{m=1}^\infty$ denote
a dense set in $BL(G)$.
Using \eqref{KM-ic} and \eqref{EM2}, we have
\be\lbl{rewrite-fm}
\int_G f_m d\mu_0^n = n^{-1}\sum_{i=1}^n f_m(\theta^0_i,\omega_i, \xi_{ni})
=:n^{-1}\sum_{i=1}^n Y_{m,ni}.
\ee
RVs $Y_{m,ni}, i\in [n],$ are independent and uniformly bounded.
Further,
\be\lbl{rewrite-f}
\begin{split}
\E\left( n^{-1}\sum_{i=1}^n Y_{m,ni}\right) & = 
n^{-1}\sum_{i=1}^n \int_\SS \int_\R f_m(\phi,\lambda,\xi_{ni}) \hat \rho_{\theta|\omega}^0
(\phi,\lambda,\xi_{ni}) g(\lambda) d\lambda d\phi\\
&=:  n^{-1}\sum_{i=1}^n F_m(\xi_{ni}),
\end{split}
\ee
Because $f_m$ is Lipschitz and $\rho_{\theta|\omega}^0$ is uniformly continuous in $\xi$ 
(cf.~\eqref{uniform-xi}), the function
$$
F_m(\xi)=\int_\SS\int_\R f_m(\phi,\lambda,\xi) 
\hat \rho_{\theta|\omega}^0
(\phi,\lambda,\xi) g(\lambda) d\lambda d\phi
$$
is continuous on $I$.

By \eqref{wXn}, we have 
\be\lbl{Fm}
\lim_{n\to\infty} \E \left(n^{-1}\sum_{i=1}^n Y_{m,ni}\right) =\int_I F_m(\xi)d\xi
=\int_G f_m d\mu_0.
\ee
By the Strong Law of Large Numbers\footnote{ It is easy to adjust the proof
of Theorem~6.1 \cite{Bil-ProbMeas} so that it applies to the triangular array 
$Y_{ni}, i\in [n], n\in \N$ (see \cite[Lemma~3.1]{Med14c}).},
from \eqref{rewrite-fm}, \eqref{rewrite-f}, and \eqref{Fm} we have 
\be\lbl{SLLN}
\lim_{n\to\infty}\int_G f_m \left( d\mu_0^n-d\mu_0\right)=
\lim_{n\to\infty} n^{-1}\sum_{i=1}^n \left(Y_{m,ni}-\E Y_{m,ni}\right)=
0 \quad \mbox{a.s.}.
\ee
Therefore,
$$
\P\left\{ \lim_{n\to\infty}\int_G f_m  \left( d\mu_0^n-d\mu_0\right)= 0 \;
\forall m\in\N\right\}=1.
$$
Using  density of $\{f_m\}_{m=1}^\infty$ in $BL(G)$, we have
$$
\P\left\{ \lim_{n\to\infty}\int_G f  \left( d\mu_0^n-d\mu_0\right)= 0 \;
\forall  f\in  BL(G)\right\}=1.
$$
\end{proof}

%%%%%%%%%%%%%%%%%%%%%%%%%%%%%%%%%%%%%%%%%%%%%%%%%%%%%%%%%%%%%%%%%%%%%%%%%%%%%%%%%%%%
\section{Linear stability}\lbl{sec.stability}
\setcounter{equation}{0}
In the previous section, we established that the Vlasov equation
\eqref{MF}, approximates discrete system \eqref{KM} for $n\gg 1$. Next, we will use \eqref{MF}
to characterize the transition to synchrony for increasing $K$. To this end, in this section,
we derive the linearized equation about the incoherent state, the steady state solution of the 
mean field equation. In the next section, we will study how the spectrum of the linearized
equation changes with $K$.

In this section, we assume that probability 
density $g$ is a continuous and even function monotonically decreasing  on $\R^+$.

\subsection{Linearization}

First, setting $\hat\rho(t,\theta,\omega,x)=\rho(t,\theta,\omega,x) g(\omega)$, 
from \eqref{MF} we derive the equation for $\rho$:
\be\lbl{MMF}
{\p\over\p t} \rho +{\p\over \p\theta} \left\{ V_\rho \rho\right\}=0,
\ee
where
$$
V_\rho(t,\theta,\omega,x)=\omega +
K \int_I\int_\SS\int_\R W(x,y)\sin(\phi-\theta)\rho(t,\phi,\lambda,y)g(\lambda)d\lambda d\phi dy.
$$
% Here, we assume that $g>0$ on $\R$.

By integrating \eqref{MF} over $\SS$, one can see that 
$$
{\p \over \p t} \int_\SS \hat\rho(t,\theta,\omega,x) d\theta=0,
$$
and, thus,
\be\lbl{equal-g}
\int_\SS \hat\rho(t,\theta,\omega,x) d\theta=
\int_\SS \hat\rho(0,\theta,\omega, x) d\theta
= \int_\SS \hat \rho^0_{\theta|\omega}(0,\theta,\omega, x) g(\omega)d\theta
=g(\omega).
\ee

Thus,
\be\lbl{balance}
\int_\SS \rho(t,\theta,\omega,x) d\theta =1\quad \forall (t,\omega,x)\in \R^+\times\R\times I.
\ee
In addition,
$$
\int_\R g(\omega) d\omega =1,
$$
because $g$ is a probability density function.
The density of the uniform distribution on $\SS$, $\rho_u=(2\pi)^{-1},$ is a steady-state solution
of the mean field equation \eqref{MMF}. It corresponds to the completely mixed state. We are interested
in stability of this solution. In the remainder of this section, we derive the linearized equation
around $\rho_u$.

Let 
\be\lbl{perturb-rho}
\rho=\rho_u+z(t,\theta, \omega, x).
\ee
By \eqref{balance},
\be\lbl{zero-mean}
\int_\SS z(t,\theta, \omega, x )d\theta =0 \quad \forall (t, \omega, x )\in\R^+\times \R\times I.
\ee

By plugging in \eqref{perturb-rho} into \eqref{MMF}, we obtain
\be\lbl{Eq-z-1}
{\p\over \p t} z(t,\theta, \omega, x ) +{\p\over \p\theta} \left\{ V_{\rho_u+z}(t,\theta, \omega, x ) 
\left({1\over 2\pi} +z\right)\right\} =0.
\ee
The expression in the curly brackets has the following form:
\begin{equation*}
\begin{split}
V_{\rho_u+z}(t,\theta, \omega, x ) \left({1\over 2\pi} +z \right)&=
\left(\omega+K 
\int_I\int_\SS\int_\R W(x,y)\sin(\phi-\theta) \times \right.\\
&\times\left. \left((2\pi)^{-1}+z(t, \phi,\lambda,y)\right) g(\lambda)d\lambda d\phi dy \right)
\left({1\over 2\pi} +z\right)= {\omega\over 2\pi} 
 + \omega z \\
&+ {K\over 2\pi}  
\int_I\int_\SS\int_\R W(x,y)\sin(\phi-\theta) z(t,\phi,\lambda,y)g(\lambda)d\lambda d\phi dy
+O(z^2).
\end{split}
\end{equation*}
Thus,
$$
{\p\over \p t} z +{\p\over \p \theta} \left\{ \omega z +{K\over 2\pi}\mathcal{G}[z] \right\}
+O(z^2) =0,
$$
where the linear operator $\mathcal{G}$ is defined by
\be\lbl{def-P2}
\mathcal{G}[z]:=
\int_I\int_\SS\int_\R W(x,y)\sin(\phi-\theta) z(t,\phi,\lambda,y)g(\lambda)d\lambda d\phi dy
\ee

We arrive at the linearized equation:
\be\lbl{linearized}
{\p\over \p t} Z +{\p\over \p \theta} \left\{ \omega Z +{K\over 2\pi}\mathcal{G}[Z] \right\}=0.
\ee

\subsection{Fourier transform}
We expand $Z$ into Fourier series
\be\lbl{Fourier}
Z(t, \theta,\omega,x)= \sum_{k=1}^\infty \hat{Z}_k(t,\omega,x)e^{-\1 k\theta}+
\overline{\left(\sum_{k=1}^\infty \hat{Z}_k(t,\omega,x)e^{-\1 k\theta}\right)},
\ee
where $\hat{Z}_k$ stands for the Fourier transform of $Z$
$$
\hat{Z}_k={1\over 2\pi} \int_\SS Z(\theta,\cdot) e^{\1 k\theta} d\theta.
$$
In \eqref{Fourier}, we are using the fact that $Z$ is real and 
$\hat{Z}_0=0$ (cf.~\eqref{zero-mean}).

The linear stability of $\rho_u$ is, thus, determined by the time-asymptotic
behavior of $\hat Z_k,\; k\ge 1$. To derive the differential equations for 
$\hat Z_k,\; k\ge 1$, we apply the Fourier transform to \eqref{linearized}:
\be\lbl{F-transf}
{\p\over \p t} \hat{Z}_k +
\widehat{\left({\p\over \p \theta} \left\{ \omega Z +{K\over 2\pi}\mathcal{G}[Z] \right\}
\right)_k}=0.
\ee

Using the definition of the Fourier transform and integration by parts, we have
\be\lbl{2nd-term}
\widehat{\left({\p\over \p \theta} \left\{ \omega Z +{K\over 2\pi}\mathcal{G}[Z] \right\}
\right)_k}=
\frac{1}{2\pi}\int_\SS {\p\over \p\theta}\left(\dots\right) e^{\1k\theta}d\theta=
-\1 k \omega \hat{Z}_k-\1 k {K\over 2\pi}\widehat{\left(\mathcal{G}[Z]\right)_k}.
 \ee
It remains to compute $\widehat{\left(\mathcal{G}[Z]\right)_k},\; k\ge 1.$
To this, end we rewrite
\begin{equation}\lbl{Pz}
\begin{split}
\mathcal{G}[Z]&={1\over 2\1} \int_I\int_\SS\int_\R W(x,y) 
\left(e^{\1(\phi-\theta)}-e^{-\1(\phi-\theta)}\right)
Z(t,\phi,\lambda,y)g(\lambda)d\lambda d\phi dy\\
&={\pi\over \1} \int_I \int_\R W(x,y) \left(e^{-\1\theta}\hat{Z}_1 -e^{\1\theta}
\hat{Z}_{-1}\right) g(\lambda)d\lambda dy.
\end{split}
\end{equation}

Using \eqref{Pz}, we compute
\begin{equation}\lbl{FT-2nd}
\begin{split}
 \widehat{\left(\mathcal{G}[Z]\right)_k} &= \frac{1}{2\pi}{\pi\over \1} 
\int_\SS \int_I \int_\R W(x,y) \left(e^{\1(k-1)\theta}\hat{Z}_1 -e^{\1(k+1)\theta}
\hat{Z}_{-1}\right) g(\lambda)d\lambda dy d\theta\\
&=\left\{ \begin{array}{ll}
{\pi\over \1}\mathcal{P}[\hat{Z}_1], & k=1,\\
0,& k>1,
\end{array}
\right.
\end{split}
\end{equation}
where
\be\lbl{def-P}
\mathcal{P}[Z]:=\int_I\int_\R W(x,y) Z(t,\lambda,y) g(\lambda) d\lambda dy.
\ee 

The combination of \eqref{F-transf}, \eqref{2nd-term}, and \eqref{FT-2nd} yields
the system of equations for $\hat{Z}_k,\; k\ge 1$:
\begin{eqnarray}\lbl{Zk-1}
{\p \over \p t} \hat{Z}_1 &=&\1 \omega \hat{Z}_1 +{K\over 2} \mathcal{P}[\hat{Z}_1],\\
\lbl{Zk-2}
{\p \over \p t} \hat{Z}_k &=&\1 k\omega \hat{Z}_k,\quad k>1.
\end{eqnarray}

\subsection{Spectral analysis}
In this section, we study \eqref{Zk-1}, which decides the linear stability
of the mixed state. We rewrite \eqref{Zk-1} as 
\be\lbl{reZk-1}
{\p\over \p t} \hat{Z}_1(t,\omega,x)=T[\hat{Z}_1](t,\omega,x).
\ee
where 
\be\lbl{def-T}
T[Z]=\1\omega Z+{K\over 2} \mathcal{P}[Z].
\ee

Equations \eqref{def-P} and \eqref{def-T} define linear operators
$\mathcal{P}$ and $T$ on the weighted Lebesgue space 
$L^2(X,gd\omega dx)$ with $X:=\R\times I$.

\begin{lem}\lbl{lem.spT}
$T:L^2(X,gd\omega dx)\to L^2(X,gd\omega dx)$ is a closed operator. The residual
spectrum of $T$ is empty and the continuous spectrum
$\sigma_c(T)=\1 \operatorname{supp}(g)$.
\end{lem}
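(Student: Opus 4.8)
The plan is to analyze the operator $T[Z]=\1\omega Z+\tfrac{K}{2}\mathcal{P}[Z]$ by splitting it into its multiplication part $M[Z]:=\1\omega Z$ and the perturbation $\tfrac{K}{2}\mathcal{P}$, then argue that $\mathcal{P}$ is ``small'' in a suitable sense relative to $M$. First I would establish that the multiplication operator $M$ on $L^2(X,g\,d\omega\,dx)$ has purely continuous spectrum equal to $\1\operatorname{supp}(g)$. This is a standard computation: $M-\1\alpha$ is injective with dense range whenever $\alpha\in\operatorname{supp}(g)$ but fails to have bounded inverse there (so $\1\alpha\in\sigma_c(M)$), whereas for $\1\alpha$ outside $\1\operatorname{supp}(g)$ the function $(\1\omega-\1\alpha)^{-1}$ is essentially bounded on the support of $g$, giving a bounded inverse and hence $\1\alpha\in\rho(M)$. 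Closedness of $M$ follows from the fact that multiplication by a measurable function is closed on its natural domain, and here $\1\omega$ is the relevant symbol.

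Next I would handle the perturbation $\mathcal{P}[Z]=\int_I\int_\R W(x,y)Z(\lambda,y)g(\lambda)\,d\lambda\,dy$. The key observation is that $\mathcal{P}$ has a kernel that is independent of $\omega$ in its output: the image $\mathcal{P}[Z](\omega,x)$ does not depend on $\omega$, so $\mathcal{P}$ factors through functions of $x$ alone. Using the Lipschitz bound on $W$ and the Cauchy--Schwarz inequality against the probability weight $g\,d\omega\,dx$, I expect $\mathcal{P}$ to be bounded on $L^2(X,g\,d\omega\,dx)$, and in fact of a degenerate form that makes it compact (its range consists of functions of $x$ only, and on the compact interval $I$ the Lipschitz kernel yields a Hilbert--Schmidt, hence compact, operator). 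Since $M$ is closed and $\tfrac{K}{2}\mathcal{P}$ is bounded, $T=M+\tfrac{K}{2}\mathcal{P}$ is closed on the domain of $M$.

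For the spectral conclusion, I would invoke the stability of the essential (continuous) spectrum under compact perturbation: by Weyl's theorem, adding the compact operator $\tfrac{K}{2}\mathcal{P}$ to $M$ does not alter the continuous spectrum, so $\sigma_c(T)=\sigma_c(M)=\1\operatorname{supp}(g)$. Any spectral values of $T$ outside $\1\operatorname{supp}(g)$ can then only be isolated eigenvalues of finite multiplicity, which belong to the point spectrum and hence cannot contribute to the residual spectrum. To rule out residual spectrum on $\1\operatorname{supp}(g)$ itself, I would argue directly: a point $\lambda$ lies in the residual spectrum only if $T-\lambda$ is injective with non-dense range, equivalently if $\bar\lambda$ is an eigenvalue of the adjoint $T^*$. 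Computing $T^*$ explicitly — here the self-adjointness of $\mathcal{W}$ and the structure of $\mathcal{P}$ make $T^*[Z]=-\1\omega Z+\tfrac{K}{2}\mathcal{P}[Z]$, a closely related operator — I would show that any eigenvalue of $T^*$ is also in the point spectrum of $T$ (by the symmetry between $T$ and $T^*$ under complex conjugation), so no $\lambda$ can be residual.

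The main obstacle I anticipate is the careful treatment of the continuous spectrum: establishing that $\1\operatorname{supp}(g)\subseteq\sigma_c(T)$ requires constructing approximate eigenfunctions (Weyl sequences) concentrated near each $\1\alpha$ with $\alpha\in\operatorname{supp}(g)$, and verifying that the compact perturbation $\mathcal{P}$ does not destroy them. The delicate point is the interplay between the weight $g$ vanishing outside its support and the domain of the multiplication operator; I would need to be precise about which $\omega$-values contribute to the $L^2(X,g\,d\omega\,dx)$ norm, since the weight effectively restricts all the analysis to $\operatorname{supp}(g)$. The remaining verifications — boundedness and compactness of $\mathcal{P}$, closedness of $T$, and emptiness of the residual spectrum via the adjoint — I expect to be routine once the spectral picture of $M$ is pinned down.
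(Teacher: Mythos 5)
Your proposal is correct and follows essentially the same route as the paper: decompose $T$ into the multiplication operator $\mathcal{M}_{\1\omega}$ (closed, with $\sigma_c(\mathcal{M}_{\1\omega})=\1\operatorname{supp}(g)$) plus the compact (Hilbert--Schmidt) perturbation $\tfrac{K}{2}\mathcal{P}$, and conclude by perturbation theory. The paper compresses everything after the decomposition into a citation of Kato, whereas you spell out the Weyl-type stability of the essential spectrum and the adjoint/conjugation argument ruling out residual spectrum; these are exactly the details the paper leaves implicit, and your version of them is sound.
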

\begin{proof}
Consider the multiplication operator 
$\mathcal{M}_{\1\omega}:L^2(X,gd\omega dx)\to L^2(X,gd\omega dx)$ defined by
\be\lbl{def-M}
\mathcal{M}_{\1\omega} z=\1\omega z, \quad \omega \in \R.
\ee
It is well known that $\mathcal{M}_{\1\omega}$ is closed and
the (continuous) spectrum of $\mathcal{M}_{\1\omega}$ lies on the imaginary axis
$\sigma_c(\mathcal{M}_{\1\omega})=\1 \cdot \mathrm{supp}(g)$.
Since $W(x,y)$ is square-integrable by the assumption,
$\mathcal{P}$ is a Hilbert-Schmidt operator on $L^2(X,gd\omega dx)$ and,
therefore, is compact \cite{Young-Hilbert}.  
Then, the statement of the lemma follows from  the perturbation theory for linear operators \cite{Kato}.
\end{proof}

Similarly, the spectrum of the operator $\mathcal{M}_{\1 j\omega}$ lies on the imaginary axis;
$\sigma (\mathcal{M}_{\1j\omega}) = ij\cdot \mathrm{supp}(g)$.
Hence, the trivial solution $\hat{Z}_j \equiv 0$ of (\ref{Zk-2}) for $j=2,3,\dots$ is neutrally stable.

We define a Fredholm integral operator $\mathcal{W}$ on $L^2(I)$ by
\be\lbl{def-W}
\mathcal{W}[V](x)=\int_\R W(x,y) V(y) dy.
\ee
Since $\mathcal{W}$ is compact, the set of eigenvalues
$\sigma_p(\mathcal{W})$ is a bounded countable with the only accumulation point at
the origin.
Since $\mathcal{W}$ is symmetric, all eigenvalues are real numbers.

\begin{lem}\lbl{lem.EV-T}
The eigenvalues of $T$ are given by
\be\lbl{sigmaT}
\sigma_p (T)=\left\{ \lambda\in\C \backslash \sigma_c(T) 
:\; D(\lambda) ={2\over \zeta K},\; \zeta\in\sigma_p (\mathcal{W})\backslash \{0\}\right\},
\ee
where
\be\lbl{D}
D(\lambda ):= \int_{\R}\! \frac{1}{\lambda -i\omega }g(\omega )d\omega . 
\ee
\end{lem}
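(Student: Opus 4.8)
The plan is to solve the eigenvalue equation $T[Z]=\lambda Z$ head-on, exploiting the key structural feature that $\mathcal{P}[Z]$, defined in \eqref{def-P}, depends only on the spatial variable $x$ and is constant in $\omega$. Writing the eigenvalue problem out via \eqref{def-T} gives $\1\omega Z(\omega,x)+\tfrac{K}{2}\mathcal{P}[Z](x)=\lambda Z(\omega,x)$, i.e.
$$(\lambda-\1\omega)\,Z(\omega,x)=\tfrac{K}{2}\,\mathcal{P}[Z](x).$$
Since we seek $\lambda\in\C\setminus\sigma_c(T)$ and $\sigma_c(T)=\1\operatorname{supp}(g)$ by Lemma~\ref{lem.spT}, the factor $\lambda-\1\omega$ is nonzero for $g$-a.e.\ $\omega$, so I may divide and record the representation
$$Z(\omega,x)=\frac{K}{2}\,\frac{\psi(x)}{\lambda-\1\omega},\qquad \psi:=\mathcal{P}[Z]\in L^2(I).$$

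First I would derive a closed self-consistency equation for $\psi$. Substituting this representation back into \eqref{def-P} and observing that the frequency integration decouples (because $\psi(y)$ does not depend on the frequency variable), the frequency integral collapses to exactly $D(\lambda)=\int_\R(\lambda-\1\omega)^{-1}g(\omega)\,d\omega$ as in \eqref{D}. This yields
$$\psi(x)=\frac{K}{2}D(\lambda)\int_I W(x,y)\psi(y)\,dy=\frac{K}{2}D(\lambda)\,\mathcal{W}[\psi](x),$$
that is, $\mathcal{W}[\psi]=\tfrac{2}{KD(\lambda)}\psi$. For a nontrivial eigenfunction one must have $\psi\neq 0$, since $\psi=0$ would force $(\lambda-\1\omega)Z=0$ a.e.\ and hence $Z=0$; consequently $D(\lambda)\neq 0$ and $\psi$ is a genuine eigenfunction of $\mathcal{W}$ with eigenvalue $\zeta=\tfrac{2}{KD(\lambda)}\neq 0$. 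Rearranging gives $D(\lambda)=\tfrac{2}{\zeta K}$ with $\zeta\in\sigma_p(\mathcal{W})\setminus\{0\}$, which is the inclusion ``$\subseteq$'' in \eqref{sigmaT}.

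For the reverse inclusion, given $\zeta\in\sigma_p(\mathcal{W})\setminus\{0\}$ with eigenfunction $\psi$ and a point $\lambda\in\C\setminus\sigma_c(T)$ satisfying $D(\lambda)=\tfrac{2}{\zeta K}$, I would set $Z(\omega,x):=\tfrac{K}{2}\psi(x)/(\lambda-\1\omega)$ and verify it is a bona fide eigenfunction; running the substitution backwards shows $\mathcal{P}[Z]=\psi$ and hence $T[Z]=\lambda Z$. The only nontrivial point, and the step I expect to be the main obstacle, is confirming membership in the weighted space, i.e.\ the bound
$$\|Z\|^2=\frac{K^2}{4}\,\|\psi\|_{L^2(I)}^2\int_\R\frac{g(\omega)}{|\lambda-\1\omega|^2}\,d\omega<\infty.$$
This is settled by a case split on $\lambda$: if $\operatorname{Re}\lambda\neq 0$ then $|\lambda-\1\omega|^2\ge(\operatorname{Re}\lambda)^2>0$ and the integral is dominated by $\int_\R g=1$; if $\lambda=\1 b$ is purely imaginary then $\lambda\notin\sigma_c(T)$ forces $b\notin\operatorname{supp}(g)$, and since $\operatorname{supp}(g)$ is closed the distance from $b$ to it is strictly positive, so $|\lambda-\1\omega|=|b-\omega|$ is bounded below on $\operatorname{supp}(g)$. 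In either case the integral converges, giving $Z\in L^2(X,g\,d\omega\,dx)$ and completing the equivalence, hence \eqref{sigmaT}.
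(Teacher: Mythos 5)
Your proof is correct and takes essentially the same route as the paper's: solve the eigenvalue equation for $Z$ in terms of $\mathcal{P}[Z]$, substitute back to get the self-consistency relation $\psi = \tfrac{K}{2}D(\lambda)\mathcal{W}[\psi]$, and read off $D(\lambda) = 2/(\zeta K)$ with $\zeta \in \sigma_p(\mathcal{W})\setminus\{0\}$ (the paper phrases the same reduction through the frequency average $V = \int_\R Z g\, d\omega$ rather than $\psi = \mathcal{P}[Z] = \mathcal{W}[V]$, an immaterial bookkeeping difference, and excludes $\zeta = 0$ by a short separate argument that your observation $\zeta = 2/(K D(\lambda)) \neq 0$ handles automatically). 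You go slightly further than the paper by explicitly constructing the eigenfunction for the reverse inclusion and verifying its membership in $L^2(X, g\, d\omega\, dx)$, a step the paper leaves implicit even though it is what justifies the later use of the formula in Lemma~\ref{lem.critical}.
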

\begin{proof}
Suppose $v\in L^2(X,g d\omega dx)$ is an eigenvector of $T$ corresponding to the 
eigenvalue $\lambda$:
$$
T[v]=\lambda v.
$$
Then
\be\lbl{eqn-v}
v=2^{-1}K(\lambda-\1\omega)^{-1} \mathcal{P}[v].
\ee
By multiplying both sides of \eqref{eqn-v} by $g(\omega)$ and integrating
with respect to $\omega,$ we have
\be\lbl{eigenvalue-equation}
\int_{\R}\! v(\omega,x)g(\omega)d\omega 
=\frac{K}{2}\int_{\R}\! \frac{1}{\lambda -i\omega }g(\omega)d\omega \cdot
\int_{I}\int_{\R}\! W(x,y)v(\omega ,y)g(\omega )d\omega dy.    
\ee
Rewrite \eqref{eigenvalue-equation} as
\begin{equation}\lbl{EV-W}
V ={K\over 2} D(\lambda)  \mathcal{W}[V],
\end{equation}
where
$$
V:=\int_\R v(\omega,\cdot) g(\omega) d\omega \in L^2(I).
$$

Equations \eqref{EV-W} and \eqref{def-W} reduce the eigenvalue problem for $T$ to that for the 
Fredholm operator $\mathcal{W}$.
Suppose $V \in L^2(I)$ is an eigenfunction of $\mathcal{W}$ associated with the eigenvalue $\zeta \neq 0$.
Then  (\ref{EV-W}) implies $D(\lambda ) = 2/(\zeta K)$.

If $0 \in \sigma _p(\mathcal{W})$ and $V$ is a corresponding  eigenvector, then
Equation \eqref{eqn-v} yields
\begin{eqnarray*}
v&=& \frac{K}{2}\frac{1}{\lambda -i\omega }\int_{I}\!\int_{\R}\! W(x,y) v(\omega ,y) g(\omega )d\omega dy \\
&=& \frac{K}{2}\frac{1}{\lambda -i\omega } \int_{I}\! W(x,y) V(y) dy
 = \frac{K}{2}\frac{1}{\lambda -i\omega } (\mathcal{W}[V])(x) = 0.  
\end{eqnarray*}
Thus, $\zeta = 0$ is not an eigenvalue of $T$.
\end{proof}

For $\zeta\neq 0$ denote
\be\lbl{Kzeta}
K(\zeta) = \frac{2}{\pi g(0)}\frac{1}{|\zeta|}.
\ee
\begin{lem} \lbl{lem.critical}
For each $\zeta \in \sigma _p(\mathcal{W})$ and $K>K(\zeta)$ there is a unique eigenvalue
of $T$ $\lambda =\lambda (\zeta, K)$ satisfying $D(\lambda ) = 2/(\zeta K)$.

For $\zeta\in \sigma _p(\mathcal{W})\bigcap \R^+,$  $\lambda(\zeta, K)$ is a positive increasing 
function of $K$ satisfying
\be\lbl{zeta+}
\lim_{K \to K(\zeta)+0} \lambda (\zeta, K) = 0+0, \quad \lim_{K \to \infty} \lambda (\zeta, K) = \infty.
\ee

For $\zeta\in \sigma _p(\mathcal{W})\bigcap \R^-,$  $\lambda(\zeta, K)$ is a negative decreasing 
function of $K$ satisfying
\be\lbl{zeta-}
\lim_{K \to K(\zeta)+0} \lambda (\zeta, K) = 0-0, \quad \lim_{K \to \infty} \lambda (\zeta, K) = -\infty.
\ee

Finally,
$$
\sigma_p(T)=\left\{ \lambda(\zeta, K):\quad \zeta\in\sigma_p(\mathcal{W})\backslash \{ 0\}, \; K>K(\zeta)\right\}
 \subset \R \backslash \{ 0\}.
$$
\end{lem}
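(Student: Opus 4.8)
The plan is to reduce everything to the scalar equation $D(\lambda)=2/(\zeta K)$ supplied by Lemma~\ref{lem.EV-T} and to analyze the map $D$ carefully on the set $\C\setminus\sigma_c(T)=\C\setminus i\operatorname{supp}(g)$. The main task is to locate all solutions of $D(\lambda)=c$ for the real constant $c=2/(\zeta K)$, and to show there is exactly one, lying on the real axis.

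First I would record the behavior of $D$ on the positive real axis. Writing $\lambda=x>0$ and using that $g$ is even, the odd part of the integrand drops out, so
\[
D(x)=\int_\R\frac{x}{x^2+\omega^2}\,g(\omega)\,d\omega
=2\int_0^\infty\frac{g(xt)}{1+t^2}\,dt
\]
after the substitution $\omega=xt$. Since $g$ is strictly decreasing on $\R^+$, the integrand $g(xt)/(1+t^2)$ is strictly decreasing in $x$ for every $t>0$, so $x\mapsto D(x)$ is real, positive and strictly decreasing on $(0,\infty)$. Dominated convergence (dominating function $g(0)/(1+t^2)$) with $g(xt)\to g(0)$ gives $\lim_{x\to0+}D(x)=2g(0)\int_0^\infty(1+t^2)^{-1}dt=\pi g(0)$, while $g(xt)\to0$ gives $\lim_{x\to\infty}D(x)=0$. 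Thus $D$ restricts to a strictly decreasing bijection $(0,\infty)\to(0,\pi g(0))$; since for $\zeta>0$ the value $c=2/(\zeta K)$ lies in $(0,\pi g(0))$ exactly when $K>K(\zeta)$, this already yields a unique positive real root together with its monotonicity and limits in $K$ (as $K$ increases, $c$ decreases, hence $\lambda$ increases; and $c\to\pi g(0)^-$ forces $\lambda\to0+$ while $c\to0+$ forces $\lambda\to\infty$, which is \eqref{zeta+}).

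The crux is ruling out all other roots in $\C\setminus i\operatorname{supp}(g)$. For $\lambda=x+iy$ I would split
\[
\operatorname{Re}D(\lambda)=\int_\R\frac{x\,g(\omega)}{x^2+(\omega-y)^2}\,d\omega,\qquad
\operatorname{Im}D(\lambda)=\int_0^\infty\frac{u}{x^2+u^2}\bigl(g(u+y)-g(y-u)\bigr)\,du,
\]
the second identity obtained by substituting $u=\omega-y$ and folding the integral. The first shows $\operatorname{Re}D(\lambda)$ has the sign of $x$, so a root of $D(\lambda)=c$ with $c>0$ must have $x>0$; the left half-plane is excluded, and so is the punctured imaginary axis, since $D(is)$ is purely imaginary for $s\notin\operatorname{supp}(g)$. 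The second identity supplies the decisive monotonicity input: for $y>0$ one has $u+y>|y-u|$ for all $u>0$, so strict monotonicity of $g$ gives $g(u+y)<g(y-u)$ and hence $\operatorname{Im}D(\lambda)<0$; by the conjugation symmetry $D(\bar\lambda)=\overline{D(\lambda)}$ one gets $\operatorname{Im}D(\lambda)>0$ for $y<0$. Therefore $D(\lambda)$ can be real and positive only when $y=0$, pinning the unique root to the positive real axis and finishing existence–uniqueness for $\zeta>0$.

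Finally I would transfer to $\zeta<0$ using the oddness relation $D(-\lambda)=-D(\lambda)$, again a consequence of evenness of $g$: the substitution $\lambda\mapsto-\lambda$ converts $D(\lambda)=2/(\zeta K)$ with $\zeta<0$ into $D(\mu)=2/(|\zeta|K)$, so $\lambda(\zeta,K)=-\lambda(|\zeta|,K)$ is negative, strictly decreasing in $K$, and satisfies \eqref{zeta-}; note $K(\zeta)=K(|\zeta|)$ since $K(\zeta)$ depends only on $|\zeta|$. Collecting the roots over all $\zeta\in\sigma_p(\mathcal{W})\setminus\{0\}$ and $K>K(\zeta)$ yields the stated description of $\sigma_p(T)$, and since every such root is a nonzero real number, $\sigma_p(T)\subset\R\setminus\{0\}$. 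I expect the sign analysis of $\operatorname{Im}D$ — the step excluding genuinely complex eigenvalues — to be the main obstacle; the monotonicity and the $K$-limits then follow routinely from the integral representation of $D$.
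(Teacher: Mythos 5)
Your proposal is correct, and its skeleton matches the paper's: reduce to the scalar equation $D(\lambda)=2/(\zeta K)$ via Lemma~\ref{lem.EV-T}, split into real and imaginary parts, and use evenness/unimodality of $g$ to pin every root to the real axis (your folded expression for $\operatorname{Im}D$ is exactly the paper's equation \eqref{eqn-im}). Where you genuinely diverge is the uniqueness/monotonicity step and the limits. The paper does \emph{not} prove directly that $x\mapsto\int_\R x\,g(\omega)/(x^2+\omega^2)\,d\omega$ is decreasing; instead it argues indirectly that if monotonicity failed there would be two roots that collide and disappear as $K$ varies, which is impossible because $D$ is holomorphic, and it obtains the limit $\pm\pi g(0)$ as $x\to\pm 0$ by citing the Poisson integral formula. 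Your scaling substitution $\omega=xt$, giving $D(x)=2\int_0^\infty g(xt)/(1+t^2)\,dt$, makes strict monotonicity manifest and yields both limits ($\pi g(0)$ at $0+$, $0$ at $\infty$) by dominated convergence — a more elementary and self-contained argument that buys an explicit bijection $(0,\infty)\to(0,\pi g(0))$, from which existence, uniqueness, monotonicity in $K$, and \eqref{zeta+} all drop out at once. Your reduction of the case $\zeta<0$ to $\zeta>0$ via the symmetry $D(-\lambda)=-D(\lambda)$ is also tidier than the paper's simultaneous treatment of both signs through the condition $\zeta x>0$. One small caution: you invoke \emph{strict} decrease of $g$ on $\R^+$, while the paper only assumes $g$ monotonically decreasing; to cover the non-strict case you would note that $g(x_1t)\equiv g(x_2t)$ for $x_1<x_2$ would force $g\equiv 0$ on $(0,\infty)$ by iteration (and likewise for the $\operatorname{Im}D$ sign argument) — a gloss the paper itself makes silently with its appeal to unimodality.
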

\begin{proof}
Since $\zeta \in \R$, setting 
$\lambda = x+\1 y$ for the equation $D(\lambda ) = 2/(\zeta K)$ yields
\begin{eqnarray}
\left\{ \begin{array}{l}
\displaystyle \int_{\R}\! \frac{x}{x^2 + (\omega -y)^2} g(\omega )d\omega = \frac{2}{\zeta K},   \\
\displaystyle \int_{\R}\! \frac{\omega -y}{x^2 + (\omega -y)^2} g(\omega )d\omega = 0. \\
\end{array} \right.
\label{eigeneq2}
\end{eqnarray}
With $\zeta=1$ this system of equations was analyzed in \cite{Chi15a} in the context of
the classical Kuramoto model \eqref{classKM}.

 The second equation of (\ref{eigeneq2}) gives
\be\lbl{eqn-im}
0 = \int_{\R}\! \frac{\omega -y}{x^2 + (\omega -y)^2} g(\omega ) d\omega 
    = \int^{\infty}_{0}\! \frac{\omega }{x^2 + \omega ^2} (g(y+\omega ) - g(y-\omega )) d\omega .
\ee
Since $g$ is even, $y=0$ is a solution of \eqref{eqn-im}.
% \marginpar{not clear why $\omega\neq 0$. also not all cases are listed: e.g., $\omega<0$.}
Furthermore, since  $g$ is unimodal, there are no other solutions of \eqref{eqn-im}.
Thus, $\lambda$ is real.
% $g(y+\omega ) - g(y-\omega ) > 0$ when $y<0, \omega >0$ and 
% $g(y+\omega ) - g(y-\omega ) < 0$ when $y>0, \omega >0$, so 
% that $y\neq 0$ does not satisfy this equation.

With $y=0$ the first equation of (\ref{eigeneq2}) yields
\be\lbl{eqn-re}
\int_{\R}\! \frac{x}{x^2 + \omega ^2} g(\omega )d\omega = \frac{2}{\zeta K}.
\ee
Since $g$ is nonnegative and $K>0$, from \eqref{eqn-re} we have $\zeta x>0$.
Further, the left-hand side of \eqref{eqn-re} satisfies
\begin{eqnarray}\lbl{Poisson}
\lim_{x \to \pm 0}\int_{\R}\! \frac{x}{x^2 + \omega ^2} g(\omega )d\omega &=& \pm \pi g(0),\\
\lbl{odd}
\lim_{x \to \pm \infty }\int_{\R}\! \frac{x}{x^2 + \omega ^2} g(\omega )d\omega &=&0.
\end{eqnarray}
The first identity  follows from the Poisson's integral formula for the upper half-plane 
\cite{SteinWeiss}. The combination of \eqref{eqn-re} and \eqref{Poisson} 
implies that $x\to 0$ as $K\to K(\zeta) + 0$, while  that of \eqref{eqn-re} and \eqref{odd} 
yields $x\to \pm \infty$ as $K\to \infty$.

For the uniqueness, it is sufficient to show that the function
$$
x\mapsto \int_{\R}\! \frac{x}{x^2 + \omega ^2} g(\omega )d\omega
$$
is monotonically decreasing in $x$ except at the jump point $x=0$.
If this were not true, there would exist two eigenvalues for some interval $K_1 < K < K_2$, 
and two eigenvalues would collide and disappear at $K = K_1$ or $K=K_2$.
This is impossible, because $D(\lambda )$ is holomorphic in $\lambda $. 
\end{proof}

To formulate the main result of this section, we will need the
following notation:
\be\lbl{def-xi}
\begin{split}
\xi_{max}(\mathcal{W})=&\left\{\begin{array}{ll}
\max\{ \zeta: \quad 
\zeta\in\sigma_p(\mathcal{W})\bigcap \R^+\},\;
\sigma_p(\mathcal{W})\bigcap \R^+\neq\emptyset,\\
0+0,\; \mbox{otherwise}.
\end{array}
\right.\\
\xi_{min}(\mathcal{W})=&\left\{\begin{array}{ll}
\min\{ \zeta: \quad 
\zeta\in\sigma_p(\mathcal{W})\bigcap \R^-\},\;
\sigma_p(\mathcal{W})\bigcap \R^-\neq\emptyset,\\
0-0,\; \mbox{otherwise}.
\end{array}
\right.
\end{split}
\ee
Further, let
\be\lbl{Kc}
K_c^+={2\over \pi g(0) \xi_{max}(\mathcal{W})}\quad\mbox{and}\quad
K_c^-={2\over \pi g(0) \xi_{min}(\mathcal{W})}.
\ee

\begin{thm}\lbl{thm.critical}
The spectrum of $T$ consists of the continuous spectrum on the imaginary 
axis and possibly one or more negative eigenvalues, if  $K\in[K_c^-, K_c^+]$,
and there is at least one positive eigenvalue of $T$, otherwise.
\end{thm}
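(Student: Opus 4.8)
The plan is to read off the full spectrum of $T$ from the three preceding lemmas and then track the sign of the eigenvalues as $K$ varies. By Lemma~\ref{lem.spT} the residual spectrum is empty and $\sigma_c(T)=\1\,\mathrm{supp}(g)$ lies on the imaginary axis for every $K$, so the only moving part is the point spectrum. By Lemma~\ref{lem.EV-T} and Lemma~\ref{lem.critical} we have $\sigma_p(T)\subset\R\setminus\{0\}$, and a real $x\neq 0$ is an eigenvalue precisely when
\[
h(x):=\int_\R \frac{x}{x^2+\omega^2}\,g(\omega)\,d\omega=\frac{2}{\zeta K}
\]
for some $\zeta\in\sigma_p(\mathcal{W})\setminus\{0\}$; this is exactly \eqref{eqn-re}, obtained after the imaginary part of $D(\lambda)=2/(\zeta K)$ forces $y=0$ via \eqref{eqn-im} (an identity independent of the sign of $\zeta K$). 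Thus the theorem reduces entirely to the existence of a \emph{positive} root $x$ of this scalar equation.

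First I would record the shape of $h$. From the Poisson-formula limits \eqref{Poisson}--\eqref{odd} and the monotonicity established inside the proof of Lemma~\ref{lem.critical}, $h$ is odd and strictly decreasing on each half-line, mapping $(0,\infty)$ onto $(0,\pi g(0))$ and $(-\infty,0)$ onto $(-\pi g(0),0)$. Hence, for fixed $\zeta$ and $K$, the equation $h(x)=2/(\zeta K)$ has a positive root if and only if $0<2/(\zeta K)<\pi g(0)$, i.e. if and only if $\zeta K>2/(\pi g(0))$. Consequently $T$ has a positive eigenvalue if and only if
\[
\sup_{\zeta\in\sigma_p(\mathcal{W})\setminus\{0\}}\zeta K>\frac{2}{\pi g(0)}.
\]

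Next I would evaluate this supremum by splitting on the sign of $K$. For $K>0$ the supremum equals $K\,\xi_{max}(\mathcal{W})$, so the criterion becomes $K>2/\bigl(\pi g(0)\xi_{max}(\mathcal{W})\bigr)=K_c^+$; for $K<0$ it equals $K\,\xi_{min}(\mathcal{W})$, so the criterion becomes $K<2/\bigl(\pi g(0)\xi_{min}(\mathcal{W})\bigr)=K_c^-$. The conventions $\xi_{max}=0+0$ and $\xi_{min}=0-0$ (giving $K_c^+=+\infty$, $K_c^-=-\infty$) make the degenerate cases automatic, and $K=0$ yields no eigenvalue at all. Since $K_c^-<0<K_c^+$, these two conditions combine to state precisely that $T$ has a positive eigenvalue if and only if $K\notin[K_c^-,K_c^+]$, with at least one such eigenvalue supplied by $\xi_{max}$ (resp.\ $\xi_{min}$) in the two cases. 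At the endpoints the candidate root satisfies $2/(\zeta K)=\pi g(0)$, forcing $x=0\notin\sigma_p(T)$, so the interval is genuinely closed; this establishes the second alternative.

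Finally, for $K\in[K_c^-,K_c^+]$ the computation above shows no positive eigenvalue exists, and combining this with $\sigma_p(T)\subset\R\setminus\{0\}$ and $\sigma_c(T)=\1\,\mathrm{supp}(g)$ shows that every spectral point is either on the imaginary axis or a negative eigenvalue, which is the first alternative. The only real work is the sign bookkeeping in the supremum: one must keep straight that $\mathcal{W}$ need not have symmetric spectrum, so positive and negative $K$ are governed by $\xi_{max}$ and $\xi_{min}$ \emph{separately}, and that Lemma~\ref{lem.critical} as stated covers only $K>0$ through the threshold $K(\zeta)>0$. The $K<0$ half therefore cannot be quoted from that lemma and must instead be read off directly from the scalar function $h$, which is where I expect the main (if modest) obstacle to lie.
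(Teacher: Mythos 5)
Your proof is correct and follows essentially the same route as the paper: the paper's own proof is a one-line appeal to Lemma~\ref{lem.critical}, whose content is exactly the scalar analysis of $h(x)=2/(\zeta K)$ via \eqref{eqn-im}, \eqref{eqn-re}, \eqref{Poisson}, and \eqref{odd} that you reconstruct. The one point where you genuinely go beyond the paper is your last paragraph: the proof of Lemma~\ref{lem.critical} explicitly assumes $K>0$ (it uses ``$g$ is nonnegative and $K>0$'' to conclude $\zeta x>0$), so the repulsive-coupling half of the theorem ($K<0$, in particular the appearance of a positive eigenvalue for $K<K_c^-$) is not literally covered by that lemma; your sign bookkeeping through $\sup_{\zeta}\zeta K$, with $\xi_{max}(\mathcal{W})$ governing $K>0$ and $\xi_{min}(\mathcal{W})$ governing $K<0$, together with the observation that the $y=0$ argument in \eqref{eqn-im} is independent of the sign of $\zeta K$, supplies precisely the case the paper leaves implicit.
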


Theorem~\ref{thm.critical} follows from Lemma~\ref{lem.critical}. 
It shows that the  incoherent state is linearly (neutrally) 
stable for $K\in[K_c^-, K_c^+]$ and is unstable otherwise. The critical values $K^{\pm}_c$ 
mark the loss of stability the incoherent state. 
The detailed
analysis of the bifurcations at $K^\pm_c$ will be presented elsewhere.
\begin{rem}\lbl{rem.compare}
For the classical Kuramoto model on the complete graph, $W\equiv 1$ 
and $\zeta_{max} (\mathcal{W})=1$. Thus, we recover the well-known
Kuramoto's transition formula \eqref{Kc-Kuramoto} \cite{Kur75}. 
In the general case, the transition points depend on  the graph 
structure through the extreme eigenvalues of the kernel operator 
$\mathcal{W}$. 
\end{rem}

\begin{rem}
For nonnegative graphons $W$, $\zeta_{max}(\mathcal{W})$ coincides with the
spectral radius of  the limiting kernel operator $\mathcal{W}$ (cf.~\cite{Sze2011}):
$$ 
\varrho (\mathcal{W})=\max\{ |\zeta|:\quad \zeta\in\sigma_p(\mathcal{W})\},
$$ 
This can be seen from the variational characterization of the eigenvalues of a
self-adjoint compact operator, which also implies
\be\lbl{Fischer}
\varrho (\mathcal{W})=\max_{\| f\|_{L^2(I)}=1} (\mathcal{W}[f], f).
\ee
\end{rem}

%%%%%%%%%%%%%%%%%%%%%%%%%%%%%%%%%%%%%%%%%%%%%%%%%%%%%%%%%%%%%%%%%%%%%%%%%%%%%%%%%%%%

\section{Approximation}\lbl{sec.approximate}
\setcounter{equation}{0}

Equation \eqref{KM} may be viewed as a base model.
To apply our results to a wider class of deterministic and random
networks, we will need approximation results, which are collected in this section.

\subsection{Deterministic networks}

Consider the Kuramoto model on the weighted graph 
$\tilde\Gamma_n=\langle [n], E(\tilde\Gamma_n),\tilde W \rangle$
\be\lbl{wKM}
\dot{\tilde\theta}_{ni} = \omega_{i} + Kn^{-1} \sum_{j=1}^n
\tilde W_{nij} \sin(\tilde\theta_{nj}-\tilde\theta_{ni}),
\quad i\in [n],
\ee
where $\tilde W_n=(\tilde W_{nij})$ is a symmetric matrix.

Denote the corresponding empirical measure by
\be\lbl{tEM}
\tilde\mu_t^n(A)=n^{-1} \sum_{i=1}^n \delta_{\tilde P_{ni}(t)} (A), \quad \; A\in\mathcal{B}(G),
\ee
where $\tilde P_{ni}(t)=(\tilde\theta_{ni}(t), \omega_i, \xi_{ni}), \; i\in[n].$

First, we show that if $W_n$ and $\tilde W_n$ are close, so are the solutions of the IVPs for \eqref{KM} and
\eqref{wKM} with the same initial conditions.  
To measure the proximity of $W_n=(W_{nij})\in \R^{n\times n}$ and $\tilde W_n=(\tilde W_{nij})$
and the corresponding solutions $\theta_n$ and $\tilde \theta_n$, we will use the following norms:
\be\lbl{norms}
\|W_n\|_{2,n}=\sqrt{n^{-2}\sum_{i,j=1}^n W_{nij}^2}, \quad
\|\theta_n \|_{1,n}=\sqrt{n^{-1}\sum_{i=1}^n \theta_{ni}^2}, 
\ee
where $\theta_n=(\theta_{n1},\theta_{n2},\dots,\theta_{nn})$ and $W_n=(W_{nij})$.

The following lemma will be used to extend our results for the KM \eqref{KM}
to other networks.
\begin{lem}\lbl{lem.cont-depW}
Let $\theta_n(t)$ and $\tilde\theta_n(t)$ denote solutions of the IVP for
\eqref{KM} and \eqref{wKM} respectively. Suppose that
the initial data for these problems coincide
\be\lbl{bdd-ic} 
\theta_n(0)=\tilde\theta_n(0).
\ee 
Then for any $T>0$ there exists $C=C(T)>0$ such that  
\be\lbl{ave} 
\max_{t\in [0,T]} \left\|\theta_n(t)-\tilde\theta_n(t)\right\|_{1,n} \le C 
\left\|W_n-\tilde W_n\right\|_{2,n}, 
\ee
where the positive constant $C$ is independent from $n$.
\end{lem}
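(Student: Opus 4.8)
The plan is to run a Gronwall argument on the discrete norm of the difference of the two solutions. Write $\phi_{ni}(t)=\theta_{ni}(t)-\tilde\theta_{ni}(t)$, so that $\phi_n(0)=0$ by \eqref{bdd-ic}, and differentiate $\|\phi_n(t)\|_{1,n}^2=n^{-1}\sum_{i}\phi_{ni}^2$. Subtracting \eqref{wKM} from \eqref{KM} and using the algebraic splitting $W_{nij}\sin(\theta_{nj}-\theta_{ni})-\tilde W_{nij}\sin(\tilde\theta_{nj}-\tilde\theta_{ni})=W_{nij}\bigl[\sin(\theta_{nj}-\theta_{ni})-\sin(\tilde\theta_{nj}-\tilde\theta_{ni})\bigr]+(W_{nij}-\tilde W_{nij})\sin(\tilde\theta_{nj}-\tilde\theta_{ni})$ expresses $\dot\phi_{ni}$ as a term carrying the common weights $W_{nij}$ plus a term carrying the weight difference $W_{nij}-\tilde W_{nij}$.

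For the common-weight term I would use the global bound $|\sin a-\sin b|\le|a-b|$ together with $|(\theta_{nj}-\theta_{ni})-(\tilde\theta_{nj}-\tilde\theta_{ni})|\le|\phi_{nj}|+|\phi_{ni}|$ and the uniform bound $|W_{nij}|\le M:=\|W\|_{L^\infty(I^2)}$; after multiplying by $\phi_{ni}$, summing, and applying Cauchy--Schwarz in the form $n^{-1}\sum_i|\phi_{ni}|\le\|\phi_n\|_{1,n}$, this contributes at most $2KM\|\phi_n\|_{1,n}^2$. For the weight-difference term I would use $|\sin|\le 1$ and Cauchy--Schwarz twice: once in the index $i$ to factor off $\|\phi_n\|_{1,n}$, and once in $j$ to replace $n^{-1}\sum_j|W_{nij}-\tilde W_{nij}|$ by $\bigl(n^{-1}\sum_j|W_{nij}-\tilde W_{nij}|^2\bigr)^{1/2}$, so that the outer sum in $i$ produces exactly $\|W_n-\tilde W_n\|_{2,n}$, the $n^{-2}$ normalization matching precisely. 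This gives the differential inequality $\frac{d}{dt}\|\phi_n\|_{1,n}^2\le 4KM\|\phi_n\|_{1,n}^2+2K\|\phi_n\|_{1,n}\|W_n-\tilde W_n\|_{2,n}$.

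The main technical nuisance is that the inhomogeneous term is linear, not quadratic, in $\|\phi_n\|_{1,n}$, so the inequality for $v:=\|\phi_n\|_{1,n}^2$ contains a $\sqrt v$ and is not Lipschitz at $v=0$; a naive passage to $\frac{d}{dt}\|\phi_n\|_{1,n}$ is illegitimate at the zeros of $\|\phi_n\|_{1,n}$. I would remove this obstacle by the standard regularization $v_\varepsilon:=v+\varepsilon$: dividing by $2\sqrt{v_\varepsilon}$ yields the genuinely linear inequality $\frac{d}{dt}\sqrt{v_\varepsilon}\le 2KM\sqrt{v_\varepsilon}+K\|W_n-\tilde W_n\|_{2,n}$, to which Gronwall applies, and letting $\varepsilon\to 0$ with $\phi_n(0)=0$ gives $\|\phi_n(t)\|_{1,n}\le\frac{1}{2M}\bigl(e^{2KMt}-1\bigr)\|W_n-\tilde W_n\|_{2,n}$ on $[0,T]$. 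Taking the maximum over $t\in[0,T]$ establishes \eqref{ave} with $C=\frac{1}{2\|W\|_{L^\infty(I^2)}}\bigl(e^{2K\|W\|_{L^\infty(I^2)}T}-1\bigr)$, which is manifestly independent of $n$. Apart from this regularization the argument is routine bookkeeping; the only other place demanding care is the double use of Cauchy--Schwarz reconciling the $n^{-1}$ normalization of $\|\cdot\|_{1,n}$ with the $n^{-2}$ normalization of $\|\cdot\|_{2,n}$.
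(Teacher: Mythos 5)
Your proof is correct, and its skeleton---the difference variable $\phi_n=\theta_n-\tilde\theta_n$, a splitting of the coupling term into a common-weight part and a weight-difference part, an energy estimate for $\|\phi_n\|_{1,n}^2$, and Gronwall---is the same as the paper's; the two genuine differences are in the direction of the splitting and in how the Gronwall step is closed. The paper splits the opposite way: it attaches the weight difference to $\sin(\theta_{nj}-\theta_{ni})$ and the common weight $\tilde W_{nij}$ to the sine difference, so its constant involves $\|\tilde W\|_{L^\infty(I^2)}$, i.e., an implicit uniform bound on the entries of the matrices $\tilde W_n$; your mirror-image splitting produces a constant involving only $\|W\|_{L^\infty(I^2)}$, which is automatically finite because $W$ is a Lipschitz graphon---a small robustness advantage of your version. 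The more instructive difference is the second one: the ``main technical nuisance'' you handle with the regularization $v_\varepsilon=v+\varepsilon$ simply never arises in the paper. Rather than keeping the forcing term linear in $\|\phi_n\|_{1,n}$, the paper bounds the weight-difference term $I_1$ by Young's inequality, $|I_1|\le \tfrac12\left(\|W_n-\tilde W_n\|_{2,n}^2+\|\phi_n\|_{1,n}^2\right)$, which gives $K^{-1}\tfrac{d}{dt}\|\phi_n\|_{1,n}^2\le C_1\|\phi_n\|_{1,n}^2+\|W_n-\tilde W_n\|_{2,n}^2$ with $C_1=4\|\tilde W\|_{L^\infty(I^2)}+1$; Gronwall then applies directly to $v=\|\phi_n\|_{1,n}^2$ (constant forcing, no square root, no regularization), yielding $\sup_{t\in[0,T]}\|\phi_n(t)\|_{1,n}^2\le e^{KC_1T}KT\,\|W_n-\tilde W_n\|_{2,n}^2$. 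So the paper's route is shorter---a single use of $|ab|\le\tfrac12(a^2+b^2)$ replaces your $\varepsilon$-regularization---while your route keeps the inhomogeneity linear and produces the marginally cleaner constant $C=\tfrac{1}{2\|W\|_{L^\infty(I^2)}}\left(e^{2K\|W\|_{L^\infty(I^2)}T}-1\right)$, which vanishes as $T\to 0$. Both arguments are complete and correct.
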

\begin{cor}\lbl{cor.cont-dep-mu}
\be\lbl{cdep-mu}
\sup_{t\in [0,T]} d(\mu_t^n,\tilde\mu_t^n)\le  C \left\|W_n-\tilde W_n\right\|_{2,n}.
\ee
\end{cor}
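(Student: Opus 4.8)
The plan is to derive the corollary from Lemma~\ref{lem.cont-depW} by relating the bounded Lipschitz distance between the two empirical measures $\mu_t^n$ and $\tilde\mu_t^n$ directly to the $\|\cdot\|_{1,n}$ distance between the solution vectors $\theta_n(t)$ and $\tilde\theta_n(t)$. The two measures are supported on the points $P_{ni}(t)=(\theta_{ni}(t),\omega_i,\xi_{ni})$ and $\tilde P_{ni}(t)=(\tilde\theta_{ni}(t),\omega_i,\xi_{ni})$ respectively, which share the same frequency and space coordinates $(\omega_i,\xi_{ni})$ and differ only in the phase variable. This common support structure, with matched atoms of equal mass $n^{-1}$, is exactly what makes the transition from phase proximity to measure proximity clean.

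First I would unwind the definition \eqref{Lip-d} of $d(\mu_t^n,\tilde\mu_t^n)$. For any $f\in\mathcal{L}$ (so $f$ is $1$-Lipschitz on $G$ with respect to $d_G$ and takes values in $[0,1]$), I write
\be
\left|\int_G f\,d\mu_t^n-\int_G f\,d\tilde\mu_t^n\right|
= \left| n^{-1}\sum_{i=1}^n \left( f(P_{ni}(t))-f(\tilde P_{ni}(t))\right)\right|
\le n^{-1}\sum_{i=1}^n d_G(P_{ni}(t),\tilde P_{ni}(t)).
\ee
Since the two points differ only in the phase coordinate, the definition of $d_G$ gives $d_G(P_{ni}(t),\tilde P_{ni}(t))=d_\SS(\theta_{ni}(t),\tilde\theta_{ni}(t))$, and by \eqref{S-distance} we have $d_\SS(\theta_{ni},\tilde\theta_{ni})\le |\theta_{ni}-\tilde\theta_{ni}|$. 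Hence the right-hand side is bounded by $n^{-1}\sum_{i=1}^n |\theta_{ni}(t)-\tilde\theta_{ni}(t)|$. Taking the supremum over $f\in\mathcal{L}$ yields $d(\mu_t^n,\tilde\mu_t^n)\le n^{-1}\sum_{i=1}^n |\theta_{ni}(t)-\tilde\theta_{ni}(t)|$, which is the $\ell^1$ average of the phase discrepancies. Since this bound holds for every admissible $f$ simultaneously and uniformly, the sup is controlled without difficulty.

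Next I would pass from the $\ell^1$ average to the $\|\cdot\|_{1,n}$ norm appearing in Lemma~\ref{lem.cont-depW} via Cauchy--Schwarz: $n^{-1}\sum_{i=1}^n |\theta_{ni}(t)-\tilde\theta_{ni}(t)| \le \bigl(n^{-1}\sum_{i=1}^n |\theta_{ni}(t)-\tilde\theta_{ni}(t)|^2\bigr)^{1/2} = \|\theta_n(t)-\tilde\theta_n(t)\|_{1,n}$, using the definition \eqref{norms}. Combining this with \eqref{ave} from Lemma~\ref{lem.cont-depW}, and taking the supremum over $t\in[0,T]$, gives
\[
\sup_{t\in[0,T]} d(\mu_t^n,\tilde\mu_t^n)\le \sup_{t\in[0,T]}\|\theta_n(t)-\tilde\theta_n(t)\|_{1,n}\le C\|W_n-\tilde W_n\|_{2,n},
\]
which is precisely \eqref{cdep-mu}. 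The constant $C$ is the one furnished by the lemma, independent of $n$.

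This argument is essentially routine once Lemma~\ref{lem.cont-depW} is in hand; there is no serious obstacle. \emph{The one point requiring care} is the implicit assumption that the initial conditions for the two systems coincide, so that Lemma~\ref{lem.cont-depW} applies — one should verify that the empirical measures $\mu^n_t$ and $\tilde\mu^n_t$ are built from solutions launched from the same phases $\theta_i^0$, which matches the hypothesis \eqref{bdd-ic}. The only genuinely substantive estimate, the Grönwall-type continuous-dependence bound \eqref{ave}, is established separately in the lemma; the corollary is just the translation of that pathwise bound into the metric $d$ on the space of probability measures.
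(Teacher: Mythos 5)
Your proof is correct and follows essentially the same route as the paper's: bound the integral of a test function $f\in\mathcal{L}$ against the difference of the two empirical measures by the average phase discrepancy, pass to the $\|\cdot\|_{1,n}$ norm, and invoke Lemma~\ref{lem.cont-depW}. You are in fact slightly more careful than the paper, which leaves implicit both the reduction $d_G(P_{ni},\tilde P_{ni})=d_\SS(\theta_{ni},\tilde\theta_{ni})\le|\theta_{ni}-\tilde\theta_{ni}|$ and the Cauchy--Schwarz step from the $\ell^1$ average to $\|\theta_n(t)-\tilde\theta_n(t)\|_{1,n}$.
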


\begin{proof}(Lemma~\ref{lem.cont-depW})
Denote $\phi_{ni}=\theta_{ni}-\tilde\theta_{ni}$.
By subtracting (\ref{wKM}) from (\ref{KM}), multiplying the result by
$n^{-1}\phi_{ni},$ and summing over $i\in [n]$, we obtain
\begin{eqnarray}\nonumber
(2K)^{-1} {d\over dt} \|\phi_n\|^2_{1,n} &=& n^{-2}\sum_{i,j=1}^n
\left(W_{nij}-\tilde W_{nij}\right)
\sin(\theta_{nj}-\theta_{ni}) \phi_{ni}\\
\nonumber
&+& 
n^{-2} \sum_{i,j=1}^n \tilde W_{nij}\left(\sin(\theta_{nj}-\theta_{ni})-
\sin(\tilde\theta_{nj}-\tilde\theta_{ni})\right)\phi_{in}\\
\lbl{subtract}
&=:&I_1+I_2.
\end{eqnarray}

Using an obvious bound $\left|\sin(\theta_{nj}-\theta_{ni})\right|\le 1$ and
an elementary inequality $|ab|\le 2^{-1}( a^2+b^2)$, we obtain
\be\lbl{I1}
|I_1|\le 2^{-1} \left(\|W_n-\tilde W_{n}\|_{2,n}^2+\|\phi_n\|_{1,n}^2\right).
\ee
Further, from Lipschitz continuity of $\sin$ and the definition of $\phi_{ni}$, we have
$$
\left|\sin(\theta_{ni}-\theta_{nj})-
\sin(\tilde\theta_{ni}-\tilde\theta_{nj})\right| \le \left| \phi_{ni}-\phi_{nj}\right|\le
| \phi_{ni}|+|\phi_{nj}|.
$$
Therefore, 
\be\lbl{I2}
|I_2|\le  2 \|\tilde W\|_{L^\infty (I^2)} \|\phi_n\|^2_{1,n}.
\ee
The combination of \eqref{subtract}, \eqref{I1}, and \eqref{I2} yields
\be\lbl{pre-Gron}
K^{-1}{d\over dt} \|\phi_n(t)\|_{1,n}^2 \le 
\left( 4\| \tilde W\|_{L^\infty (I^2)} +1\right) \|\phi_n(t)\|_{1,n}^2 +\|W_n-\tilde W_n\|_{2,n}^2.
\ee
By the Gronwall's inequality \cite[Appendix B.2.j]{Eva-PDE}, 
\begin{eqnarray*}
\|\phi_n(t)\|_{1,n}^2 &\le & e^{KC_1t}\left( \|\phi_n(0)\|_{1,n}^2 + 
\int_0^t K\|W_n-\tilde W_n\|_{2,n}^2  ds \right)\\
&\le & e^{KC_1t}Kt  \|W_n-\tilde W_n\|_{2,n}^2,
\end{eqnarray*}
where we used $\phi_n(0)=0$ and $C_1:=\left( 4\| \tilde W\|_{L^\infty (I^2)} +1\right)$.
Thus,
$$
\sup_{t\in [0,T]} \|\phi_n(t)\|_{1,n}^2\le e^{KC_1T} KT \|W_n-\tilde W_n\|_{2,n}^2.
$$
\end{proof}

\begin{proof}(Corollary~\ref{cor.cont-dep-mu})
Let $f\in\mathcal{L}$ (cf.~\eqref{def-L}) and consider
\begin{equation*}
\begin{split}
\left|\int_G f \left( d\mu_t^n-d\tilde\mu_t^n \right)\right| &=
\left|n^{-1} \sum_{i=1}^n f(\theta_{ni}(t))-f(\tilde\theta_{ni}(t))\right| \\
&\le n^{-1}\sum_{i=1}^n \left|\theta_{ni}(t)-\tilde\theta_{ni}(t)\right| \\
&\le \|\theta_n(t)-\tilde\theta_n(t)\|_{1,n}.
\end{split}
\end{equation*}
By Lemma~\ref{lem.cont-depW},
$$
\max_{t\in [0,T]} d(\mu^n_t,\tilde\mu^n_t) 
=\sup_{f\in\mathcal{L}} \left|\int_G f\left( d\mu_t^n-d\tilde\mu_t^n\right)\right|
\le C\|W_n-\tilde W_n\|_{2,n}.
$$
\end{proof}

\subsection{Random networks}
We now turn to the KM on random graphs. To this end, we use 
W-random graph $\bar \Gamma_n=G_r(X_n,W)$, which we define next. As before, 
$X_n$ is a set of points \eqref{Xn},\eqref{wXn}. $\bar\Gamma_n$ is 
a graph on $n$ nodes, i.e., $V(\bar\Gamma_n)=[n]$. The edge set is defined as follows:
\be\lbl{Pedge}
\P\left( \{i,j\}\in E(\Gamma_n)\right)= W(\xi_{ni}, \xi_{nj}).
\ee
The decision for each pair $\{i,j\}$ is made independently from the decisions on other pairs.

The KM on the W-random graph $\bar\Gamma_n=G_r(X_n,W)$ has the following form:
\be\lbl{xiKM}
\dot{\bar\theta}_{ni} = \omega_{i} + Kn^{-1} \sum_{j=1}^n e_{nij} \sin(\bar\theta_{nj}-\bar\theta_{ni}),
\quad i\in [n]:=\{1,2,\dots,n\}
\ee
where  $e_{nij}, 1\le i\le j\le n$ are independent Bernoulli RVs:
$$
\P(e_{nij}=1)=W(\xi_{ni},\xi_{nj}),
$$
and $e_{nij}=e_{nji}.$ 
\begin{lem}\lbl{lem.ave}
Let $\theta_n(t)$ and $\bar\theta_n(t)$ denote solutions of the IVP for
\eqref{KM} and \eqref{xiKM} respectively. Suppose that
the initial data for these problems coincide
\be\lbl{Wbdd-ic} 
\theta_n(0)=\bar\theta_n(0).
\ee 
Then 
\be\lbl{Wave}
\lim_{n\to\infty} 
\sup_{t\in [0,T]} \left\|\theta_n(t)-\bar\theta_n(t)\right\|_{1,n} =0 \quad \mbox{a.s.}.
\ee
\end{lem}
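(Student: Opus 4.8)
\textbf{Proof proposal for Lemma~\ref{lem.ave}.}
The plan is to compare the random network \eqref{xiKM} with the deterministic weighted network \eqref{KM} by treating the random adjacency entries $e_{nij}$ as noisy versions of the deterministic weights $W_{nij}=W(\xi_{ni},\xi_{nj})$, and then to invoke the continuous-dependence estimate of Lemma~\ref{lem.cont-depW}. Since $\E[e_{nij}]=W(\xi_{ni},\xi_{nj})=W_{nij}$, the matrix $E_n=(e_{nij})$ is, entrywise, an unbiased randomization of $W_n$. The natural strategy is therefore to first establish that
\be\lbl{prop-randclose}
\lim_{n\to\infty} \|E_n-W_n\|_{2,n}=0\quad\mbox{a.s.},
\ee
and then combine this with \eqref{ave} (applied with $\tilde W_n=E_n$) to conclude \eqref{Wave}. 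Note that Lemma~\ref{lem.cont-depW} requires $\tilde W$ to define a symmetric matrix, which holds since $e_{nij}=e_{nji}$, and its constant $C=C(T)$ depends on $\|\tilde W\|_{L^\infty}$; because the $e_{nij}\in\{0,1\}$ are uniformly bounded by $1$, this constant can be taken uniform in $n$.

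The heart of the argument is \eqref{prop-randclose}, which is a concentration statement. First I would write
$$
\|E_n-W_n\|_{2,n}^2=n^{-2}\sum_{i,j=1}^n \left(e_{nij}-W_{nij}\right)^2,
$$
and analyze this double sum. The summands $(e_{nij}-W_{nij})^2$ for $i\le j$ are independent, centered up to their known means, and bounded in $[0,1]$. The quantity $n^{-2}\sum_{i<j}(e_{nij}-W_{nij})^2$ is an average of roughly $n^2/2$ independent bounded random variables, so a large-deviation or second-moment estimate should force it to concentrate near its expectation. Its expectation is
$$
n^{-2}\sum_{i,j=1}^n \operatorname{Var}(e_{nij})=n^{-2}\sum_{i,j=1}^n W_{nij}\left(1-W_{nij}\right),
$$
which is $O(1)$ and does \emph{not} vanish; the pointwise fluctuations of a single Bernoulli do not die out. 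This is the main obstacle: the raw entrywise difference $\|E_n-W_n\|_{2,n}$ does not tend to zero. The resolution is that what actually controls the dynamics via Lemma~\ref{lem.cont-depW} is not the entrywise $\|\cdot\|_{2,n}$ discrepancy but the effect of the averaged coupling term; the correct route is to revisit the energy estimate of Lemma~\ref{lem.cont-depW} directly for the difference $\phi_{ni}=\theta_{ni}-\bar\theta_{ni}$ and observe that the critical term is
$$
I_1=n^{-2}\sum_{i,j=1}^n \left(e_{nij}-W_{nij}\right)\sin(\theta_{nj}-\theta_{ni})\,\phi_{ni},
$$
in which the randomness appears linearly and is summed against a factor $n^{-1}$ in each index.

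Thus the sharper plan is: derive, as in \eqref{subtract}, the differential inequality for $\|\phi_n(t)\|_{1,n}^2$, retaining $I_1$ with the \emph{signed} differences $e_{nij}-W_{nij}$ rather than bounding by $\|E_n-W_n\|_{2,n}$ via absolute values. For fixed $i$, the inner sum $n^{-1}\sum_{j=1}^n (e_{nij}-W_{nij})\sin(\theta_{nj}-\theta_{ni})$ is an average of $n$ independent centered bounded terms, hence of order $O(n^{-1/2})$ with high probability; squaring and averaging over $i$ then produces a bound on $I_1$ that vanishes almost surely, after which Gronwall's inequality \cite[Appendix B.2.j]{Eva-PDE} yields \eqref{Wave}. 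The almost-sure statement over all $t\in[0,T]$ follows because the supremum in time is already controlled by the Gronwall bound, which is monotone in $T$, so a single concentration estimate for the random coefficients suffices; I would make this rigorous by a Borel--Cantelli argument, establishing that the relevant random averages are summably small along $n$, or by invoking the triangular-array strong law in the spirit of \cite[Lemma~3.1]{Med14c}. The delicate point throughout is that one must exploit the averaging over $j$ \emph{before} estimating, rather than controlling $E_n-W_n$ entrywise, since only the former benefits from the independence of the Bernoulli variables.
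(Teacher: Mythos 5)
Your final plan is essentially the paper's own proof: the paper likewise derives the energy estimate for $\phi_n=\theta_n-\bar\theta_n$, keeps the signed term $I_1=n^{-2}\sum_{i,j}(e_{nij}-W_{nij})\sin(\theta_{nj}-\theta_{ni})\phi_{ni}$, exploits the independence of the centered variables $\eta_{nij}=e_{nij}-W_{nij}$ against the deterministic coefficients $a_{nij}(t)=\sin(\theta_{nj}-\theta_{ni})$, applies Gronwall, and concludes by Markov plus Borel--Cantelli --- and your diagnosis that the naive route via $\|E_n-W_n\|_{2,n}$ fails is exactly why the paper does this finer analysis. The only detail you leave unexecuted is the summability estimate: a second-moment (Chebyshev) bound gives tails of order $n^{-1}$, which is not summable, so one must compute the fourth-moment quantity $\E\bigl(\int_0^T\|Z_n(t)\|_{1,n}^2\,dt\bigr)^2=O(n^{-2})$ (as the paper does, with $Z_{ni}=n^{-1}\sum_j a_{nij}\eta_{nij}$) before Borel--Cantelli applies.
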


\begin{proof} 
The proof follows the lines of the proof of Lemma~\ref{lem.cont-depW}.
As before, we set up the equation for $\phi_{ni}:=\bar\theta_{ni}- \theta_{ni}$:
\begin{eqnarray}\nonumber
(2K)^{-1} {d\over dt} \|\phi_n\|^2_{1,n} &=& n^{-2}\sum_{i,j=1}^n \left(e_{nij}-W_{nij}\right)
\sin(\theta_{nj}-\theta_{ni}) \phi_{ni}\\
\nonumber
&+& 
n^{-2} \sum_{i,j=1}^n e_{nij}\left(\sin(\bar\theta_{nj}-\bar\theta_{ni})-
\sin(\theta_{nj}-\theta_{ni})\right)\phi_{in}\\
\lbl{subtract-1}
&=:&I_1+I_2.
\end{eqnarray}

As in \eqref{I2}, we have
bound
\be\lbl{est-I_1}
|I_2|\le {n}^{-2} \sum_{i,j=1}^n \left( | \phi_{ni}|+|\phi_{nj}|\right) | \phi_{ni}|\le 
2 \|\phi_n\|^2_{1,n},
\ee
where we used $0\le e_{nij}\le 1.$ 

Next, we turn to the first term on the right hand side of \eqref{subtract-1}.
For this, we will need the following definitions:
\begin{eqnarray*}
Z_{ni}(t)   &=& n^{-1}\sum_{j=1}^n a_{nij}(t)\eta_{nij},\\
a_{nij}(t) &=& \sin\left(\theta_{nj}(t)-\theta_{ni}(t)\right),\\
\eta_{nij} &=&e_{nij}-W_{nij}.
\end{eqnarray*}
and $Z_n=(Z_{n1}, Z_{n2},\dots,Z_{nn})$.
With these definitions in hand, we estimate $I_1$ as follows:
\be\lbl{est-I-1}
|I_1|=|n^{-1}\sum_{i=1}^n Z_{ni}\phi_{ni}|\le 
 2^{-1}\left( \|Z_n\|_{1,n}^2 +\|\phi_n\|^2_{1,n}\right),
\ee

The combination of \eqref{subtract-1}-\eqref{est-I-1} yields,
\be\lbl{pre-Gron2}
{d\over dt} \|\phi_n(t)\|_{1,n}^2 \le 5K\|\phi_n(t)\|_{1,n}^2 +K\|Z_n(t)\|_{1,n}^2.
\ee
Using the Gronwall's inequality and \eqref{Wbdd-ic}, we have 
$$
\|\phi_n(t)\|_{1,n}^2\le K \exp\{ 5Kt\} \int_0^t \|Z_n(s)\|_{1,n}^2 ds
$$
and 
\be\lbl{Gron}
\sup_{t\in [0,T]} \|\phi_n(t)\|_{1,n}^2\le K\exp\{5KT\} \int_0^T \|Z_n(t)\|_{1,n}^2 dt.
\ee

 Our next goal is to estimate $\int_0^T \|Z_n(t)\|_{1,n}^2 dt$. Below, we show that
\be\lbl{our-goal}
 \int_0^T\|Z_n(t)\|_{1,n}^2 dt\to 0\quad\mbox{as}\; n\to\infty \;\mbox{a.s.}.
 \ee
To this end, we will use the following observations.
Note that $\eta_{nik}$ and $\eta_{nil}$ are independent for $k\neq l$
and
\be\lbl{mean-0}
\E \eta_{nij}=  \E e_{nij}-W_{nij} =0,
\ee
where we used $\P (e_{nij}=1) = W_{nij}$.

Further,
\be\lbl{eta2}
\begin{split}
\E\eta_{nij}^2 &= \E (e_{nij}-W_{nij})^2 = \E e_{nij}^2- W_{nij}^2 \\
&=  W_{nij}-W_{nij}^2 \le 1/4.
\end{split}
\ee
and
\be\lbl{eta4}
\begin{split}
\E(\eta_{nij}^4) & = \E  (e_{nij}-W_{nij})^4\\
& = \E\left(e_{nij}^4-4e_{nij}^3W_{nij} +6 e_{nij}^2 W_{nij}^2-4e_{nij} W_{nij}^3
+W_{nij}^4\right)\\
&= W_{nij}-4W_{nij}^2+6W_{nij}^3-3W_{nij}^4\\
&= W_{nij}\left(1-W_{nij}\right)^4+ W_{nij}^4\left(1-W_{nij}\right) \le 2^{-4}.
\end{split}
\ee

Next, 
\be\lbl{Zni2}
\int_0^T Z_{ni}(t)^2 dt= n^{-2} \sum_{k,l=1}^n c_{nikl} \eta_{nik}\eta_{nil}, 
\ee
where
\be\lbl{def-cnjikl}
c_{nikl}=\int_0^T a_{nik}(t) a_{nil}(t) dt \quad\mbox{and}\quad |c_{nikl}|\le T. 
\ee
Further,
\be\lbl{Zni2-2}
\int_0^T \|Z_{n}(t)\|_{1,n}^2 dt= n^{-3} \sum_{i,k,l=1}^n c_{nikl} \eta_{nik}\eta_{nil}
\ee
and, finally,
\be\lbl{Zni4}
\E \left( \int_0^T \|Z_n(t)\|^2_{1,n}dt\right)^2 =
n^{-6} \sum_{i,k,l, j,p,q=1}^n c_{nikl} c_{njpq} 
\E\left(\eta_{nik}\eta_{nil} \eta_{njp}\eta_{njq}\right).
\ee
We have six summation indices $i, k, l, j, p, q$ ranging from $1$ to $n$. Since $\E\eta_{nik}=0$
for $i,k \in [n],$ and RVs $\eta_{nik}$ and $\eta_{njp}$ are independent whenever 
$\{i,k\}\neq \{j,p\}$, the nonzero terms on the right-hand side of \eqref{Zni4} fall into two
groups:
\begin{center}
\begin{tabular}{c c}
$I \; :$ & \quad $c_{nikk}^2 \eta_{nik}^4$ \\
$II\; :$ &\quad  $c_{nikk}  c_{njpp} \eta_{nik}^2\eta_{njp}^2\; (i\neq j)$\quad
or $\quad  c_{nikl}^2 \eta_{nik}^2\eta_{nil}^2\; (k\neq l).$
\end{tabular}
\end{center}

There are $n^2$ terms of type $I$ and $3n^3(n-1)$ terms of type $II$.
Thus,
\be\lbl{2nd-moment}
\E \left( \int_0^T \|Z_n(t)\|^2_{1,n}dt\right)^2 \le T^2 n^{-6} \left( n^2+3n^3(n-1)\right)
= O(n^{-2}),
\ee
where we used \eqref{Zni4}, \eqref{eta2}, \eqref{eta4}, and the 
bound on $|c_{nikl}|$ in \eqref{def-cnjikl}.

Next, for a given $\epsilon>0$ we denote
\be\lbl{def-An}
A_n^\epsilon=\left\{ \left|\int_0^T \|Z_n(t)\|_{1,n}^2 dt\right| 
\ge \epsilon \right\}.
\ee
and use Markov's inequality and \eqref{2nd-moment} to obtain
\be\lbl{union-An}
\sum_{n=1}^\infty \P (A_n^\epsilon) \le \epsilon^{-2} \sum_{n=1}^\infty
\E \left( \int_0^T \|Z_n(t)\|^2_{1,n}dt\right)^2<\infty.
\ee
By the Borel-Cantelli Lemma,  \eqref{def-An} and \eqref{union-An} imply 
\eqref{our-goal}. The latter combined with \eqref{Gron} concludes the proof 
of Lemma~\ref{lem.ave}.
\end{proof}

\section{Examples}\lbl{sec.examples}
\setcounter{equation}{0}
\subsection{Erd\H{o}s-R{\' e}nyi graphs}\lbl{sec.ER}
Let $p\in (0,1)$, $X_n$ be defined in \eqref{Xn}, \eqref{wXn}, and
$W_p(x,y)\equiv p$. Then $\Gamma_{n,p}=G_r(X_n,W_p)$ is a family of 
Erd\H{o}s-R{\' e}nyi
random graphs. To apply the transition point formula 
\eqref{Kc}, we need to compute the largest eigenvalue of the self-adjoint compact operator 
$\mathcal{W}_p: L^2(I)\to L^2(I)$
defined by 
\be\lbl{Wp}
\mathcal{W}_p[f](x)=\int_I W(x,y)f(y) dy=p\int_I f(y)dy, \quad f\in L^2(I).
\ee

\begin{lem}\lbl{lem.ER}
The largest eigenvalue of $\mathcal{W}_p$ is $\zeta_{max}(\mathcal{W}_p) = p$.
\end{lem}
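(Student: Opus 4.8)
The plan is to compute the spectrum of $\mathcal{W}_p$ directly from its explicit form in \eqref{Wp}, exploiting the fact that the graphon $W_p\equiv p$ is constant, so the associated kernel operator is essentially a rank-one projection scaled by $p$. First I would observe that for any $f\in L^2(I)$,
\be\lbl{range-Wp}
\mathcal{W}_p[f](x)=p\int_I f(y)\,dy,
\ee
which is a constant function of $x$. Hence the range of $\mathcal{W}_p$ is contained in the one-dimensional subspace spanned by the constant function $\mathbf{1}(x)\equiv 1$, so $\mathcal{W}_p$ has rank at most one.

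Next I would identify the eigenvalues explicitly. Any eigenfunction associated with a nonzero eigenvalue must lie in the range of $\mathcal{W}_p$, hence must be a constant multiple of $\mathbf{1}$. Applying $\mathcal{W}_p$ to $\mathbf{1}$ gives
\be\lbl{apply-one}
\mathcal{W}_p[\mathbf{1}](x)=p\int_I 1\,dy=p=p\,\mathbf{1}(x),
\ee
so $\zeta=p$ is an eigenvalue with eigenfunction $\mathbf{1}$. Every function $f$ with $\int_I f\,dy=0$ lies in the kernel, giving the eigenvalue $0$ with infinite multiplicity. Since the range is one-dimensional, there are no other nonzero eigenvalues. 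Because $p\in(0,1)$ is positive, $p$ is in particular the largest eigenvalue, which establishes $\zeta_{max}(\mathcal{W}_p)=p$.

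This argument is entirely elementary and there is no real obstacle; the only point requiring a word of care is confirming that no nonzero eigenvalue other than $p$ exists, which follows immediately from the rank-one structure in \eqref{range-Wp} since a rank-one operator has at most one nonzero eigenvalue. As a stylistic alternative, one could instead invoke the variational characterization \eqref{Fischer}, since $W_p\geq 0$ is a nonnegative graphon, and maximize $(\mathcal{W}_p[f],f)=p\left(\int_I f\,dy\right)^2$ over $\|f\|_{L^2(I)}=1$; by the Cauchy--Schwarz inequality this maximum equals $p$, attained at the normalized constant function. Either route yields $\zeta_{max}(\mathcal{W}_p)=p$, completing the proof.
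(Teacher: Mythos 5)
Your proof is correct and follows essentially the same route as the paper's: both arguments exploit that $\mathcal{W}_p[f]$ is a constant function, so any eigenfunction with nonzero eigenvalue must be constant, and evaluating on the constant function yields $\zeta=p$. Your rank-one framing and the alternative via the variational characterization \eqref{Fischer} are fine additions but do not change the substance of the argument.
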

\begin{proof}
Suppose $\lambda\in\R\backslash \{0\}$ is an eigenvalue of 
$\mathcal{W}_p$ and $v\in L^2(I)$ is the corresponding
eigenvector. Then
\be\lbl{EV-identity}
p\int_I v(y)dy= \zeta v(x).
\ee
Since the right-hand side is not identically $0$, 
$v=\mbox{constant}\neq 0$. By integrating both sides
of \eqref{EV-identity}, we find that $\zeta=p$.
\end{proof}

Thus, for the KM on Erd\H{o}s-R{\' e}nyi graphs, we have
$$
K^+_c={2\over \pi g(0)p}, \quad K_c^- = -\infty.
$$

\subsection{Small-world network} \lbl{sec.SW}

Small-world (SW) graphs interpolate between regular nearest-neighbor graphs and
completely random Erd\H{o}s-R{\' e}nyi graphs. They found
widespread applications, because they combine  features of regular symmetric graphs 
and random graphs, just as seen in many real-world networks \cite{WatStr98}.

Following \cite{Med14b, Med14c}, we construct SW graphs as W-random graphs 
\cite{LovSze06}.
To this end let $X_n$ be a set of points from \eqref{Xn} satisfying \eqref{wXn},
and define $W_{p,r}:I^2\to I$ by
\be\lbl{def-Wpr}
W_{p,r}(x,y)=
\left\{ 
\begin{array}{ll}
1-p, & d_\SS(2\pi x,2\pi y)\le 2\pi r,\\
p, &\mbox{otherwise},
\end{array}
\right.
\ee
where $p,r\in (0,1/2)$ are two parameters.

\begin{df}\lbl{df.Wpr} \cite{Med14b}
$\Gamma_n=G_r(X_n, W_{p,r})$ is called W-small-world graph.
\end{df}

The justification of the mean field limit in Section~\ref{sec.MF}
relies on the assumption that the graphon $W$ is Lipschitz continuous.
To apply Theorem~\ref{thm.converge} to the model at hand, we approximate 
the piecewise constant graphon $W_{p,r}$ by 
a Lipschitz continuous $W_{p,r}^\epsilon$:
\be\lbl{L2close}
\|W_{p,r}-W^\epsilon_{p,r}\|_{L^2(I^2)}<\epsilon.
\ee
Further, we approximate the SW graph $\Gamma_n=G_r(X_n, W_{p,r})$ by 
$\Gamma_n^\epsilon=G_r(X_n, W^\epsilon_{p,r})$.
The approximation results in Section~\ref{sec.approximate} guarantee that the 
empirical measures generated by the solutions of the IVPs for the KMs on 
$\Gamma_n$ and $\Gamma_n^\epsilon$  (with the same initial data) are
$O(\epsilon)$ close with high probability for sufficiently large $n$. Thus, below
we derive the transition point formula for the KM on $(\Gamma^\epsilon_n)$ and 
take the limit as $\epsilon\to 0$ to obtain the critical values for the 
KM on SW graphs $(\Gamma_n)$.

Lemma~\ref{lem.ave} and Theorem~\ref{thm.converge} justify \eqref{MF}, \eqref{def-V}
as the continuum limit for the KM on the sequence of SW graphs $(\Gamma_n^\epsilon)$.
Thus, \eqref{Kc} yields the transition point formula for the KM on SW graphs.
To use this formula, we need to compute the extreme eigenvalues of 
$\mathcal{W}_{p,r}^\epsilon: L^2(I)\to L^2(I)$ defined by
$$
\mathcal{W}_{p,r}^\epsilon [f]=\int_I W^\epsilon_{p,r} (\cdot, y) f(y) dy, \quad f\in L^2(I).
$$

\begin{lem}\lbl{lem.SW}
The largest eigenvalue of $\mathcal{W}_{p,r}^\epsilon$ is 
\be\lbl{radius-Wpr}
\zeta_{max} (\mathcal{W}_{p,r}^\epsilon) =2r +p -4pr +o_\epsilon(1).
\ee
\end{lem}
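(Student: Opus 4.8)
The plan is to compute the spectrum of the operator $\mathcal{W}_{p,r}^\epsilon$ by first analyzing the exact (non-smoothed) operator $\mathcal{W}_{p,r}$ and then arguing that the $O(\epsilon)$ perturbation in \eqref{L2close} moves the eigenvalues by only $o_\epsilon(1)$. The key structural observation is that the kernel $W_{p,r}(x,y)$ defined in \eqref{def-Wpr} depends only on the circular distance $d_\SS(2\pi x, 2\pi y)$, so it is a convolution-type kernel on the circle. Concretely, writing $W_{p,r}(x,y) = h(x-y)$ for the $1$-periodic even function $h$ that equals $1-p$ on the arc of half-width $r$ around $0$ and equals $p$ elsewhere, the operator $\mathcal{W}_{p,r}$ is a circular convolution and is therefore diagonalized by the Fourier basis $\{e^{2\pi \1 k x}\}_{k\in\Z}$ on $L^2(I)$. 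First I would record this diagonalization explicitly.

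With the Fourier basis in hand, the eigenvalues are simply the Fourier coefficients $\hat h(k) = \int_I h(t) e^{-2\pi \1 k t}\, dt$. The $k=0$ coefficient is the average of $h$, namely $(1-p)\cdot 2r + p\cdot(1-2r) = 2r + p - 4pr$, which I claim is the largest eigenvalue. For $k \neq 0$ I would compute $\hat h(k)$ directly: since $h - p$ is $(1-2p)$ times the indicator of the arc $\{|t| \le r\}$ (mod $1$), one gets $\hat h(k) = (1-2p)\,\frac{\sin(2\pi k r)}{\pi k}$ for $k\neq 0$. The main step is then to verify the inequality $|\hat h(k)| \le \hat h(0)$ for all $k\neq 0$, i.e. that the zeroth coefficient dominates; this should follow from $\big|\tfrac{\sin(2\pi k r)}{\pi k}\big| \le 2r$ together with the elementary bound $|1-2p|\cdot 2r \le 2r + p - 4pr$ for $p, r \in (0,1/2)$, after checking signs. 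This comparison is the step I expect to be the main obstacle, since one must handle the sign of $1-2p$ and confirm that no higher Fourier mode overtakes the mean; a clean way is to note $|\hat h(k)| \le \int_I |h(t)-p|\,dt + |p\cdot\mathbf{1}_{k=0}|$ bounds and that $\hat h(0) = \|h\|$-type quantities dominate for a kernel taking only the two values $p < 1-p$.

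Having identified $\zeta_{max}(\mathcal{W}_{p,r}) = 2r + p - 4pr$ for the exact operator, I would finish by transferring the result to the Lipschitz approximant $\mathcal{W}_{p,r}^\epsilon$. Because $\mathcal{W}_{p,r}^\epsilon - \mathcal{W}_{p,r}$ is an integral operator whose kernel has $L^2(I^2)$ norm less than $\epsilon$ by \eqref{L2close}, it is Hilbert–Schmidt with operator norm at most $\epsilon$. Weyl's inequality (or the variational characterization \eqref{Fischer} for the top eigenvalue of a self-adjoint compact operator) then gives $|\zeta_{max}(\mathcal{W}_{p,r}^\epsilon) - \zeta_{max}(\mathcal{W}_{p,r})| \le \|\mathcal{W}_{p,r}^\epsilon - \mathcal{W}_{p,r}\|_{op} \le \epsilon$, which is absorbed into the $o_\epsilon(1)$ term and yields \eqref{radius-Wpr}. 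The only care needed here is that $\mathcal{W}_{p,r}^\epsilon$ remains self-adjoint, which holds provided $W_{p,r}^\epsilon$ is chosen symmetric — a harmless requirement on the mollification.
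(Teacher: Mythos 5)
Your proposal is correct and follows essentially the same route as the paper: recognize that $W_{p,r}$ is a convolution kernel on the circle, diagonalize in the Fourier basis, read off $\zeta_0 = 2r+p-4pr$ and $\zeta_k = (\pi k)^{-1}(1-2p)\sin(2\pi k r)$, and transfer to $\mathcal{W}_{p,r}^\epsilon$ by perturbation. In fact you are slightly more careful than the paper in two spots: you explicitly verify that $|\zeta_k| \le (1-2p)\cdot 2r = 2r - 4pr < 2r+p-4pr = \zeta_0$ for $k \neq 0$ (the paper asserts the maximality of $\zeta_0$ without comment), and you replace the paper's appeal to ``continuous dependence of eigenvalues on $\varepsilon$'' with the quantitative bound $|\zeta_{max}(\mathcal{W}_{p,r}^\epsilon)-\zeta_{max}(\mathcal{W}_{p,r})| \le \|\mathcal{W}_{p,r}^\epsilon-\mathcal{W}_{p,r}\|_{op} \le \|W_{p,r}^\epsilon-W_{p,r}\|_{L^2(I^2)} < \epsilon$ via the variational characterization.
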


\begin{proof}
We calculate the largest eigenvalue of $\mathcal{W}_{p,r}$.
Using the definition \eqref{def-Wpr}, one can write
$$
\mathcal{W}_{p,r} [f](x)= \int_\SS K_{p,r} (x-y) f(y) dy,
$$
where $K_{p,r}$ is a $1-$periodic function on $\R,$ whose restriction 
to the interval $[-1/2, 1/2)$ is defined as follows
\be\lbl{def-Kpr}
K_{p,r}(x)=
\left\{ 
\begin{array}{ll}
1-p, & |x|\le r,\\
p, &\mbox{otherwise},
\end{array}
\right.
\ee

The eigenvalue problem for $\mathcal{W}_{p,r}$ can be rewritten as
$$
K_{p,r} \ast v=\zeta v.
$$ 

Thus, the eigenvalues of $\mathcal{W}_{p,r}$ are given by the Fourier coefficients of $K_{p,r}(x)$ as
$$
\zeta_k= (\hat{K}_{p,r} )_k := \int_{\SS}\! K_{p,r}(x)e^{-2\pi i kx}dx, 
$$
for $k\in \Z$.
The corresponding eigenvectors $v_k= e^{\1 2\pi kx}, k\in \Z,$ form a complete
orthonormal set in $L^2(I)$.

A straightforward calculation yields
\be\lbl{zeta-k}
\zeta_k =\left\{ \begin{array}{ll} 2r+p-4rp, & k=0,\\
(\pi k)^{-1} (1-2p) \sin (2\pi kr), & k\neq 0.
\end{array}
\right.
\ee
Further,
\be\lbl{specR-SW}
\zeta_{max}(\mathcal{W}_{p,r})=2r+p-4rp.
\ee
The estimate \eqref{radius-Wpr} follows from \eqref{specR-SW} and \eqref{L2close}
via continuous dependence of the eigenvalues with respect to the parameter $\varepsilon $.
\end{proof}

Thus, for the KM on SW graphs, we obtain
\begin{eqnarray*}
K^+_c = \frac{2}{\pi g(0)}\frac{1}{2r+p-4pr}.
\end{eqnarray*}
The smallest negative eigenvalue is given by $\zeta_{k(r)}$ for some $k(r) \neq 0$.
While it is difficult to obtain an explicit expression $k(r)$, from \eqref{zeta-k}
 we can find a lower bound for it:
\be\lbl{low}
\xi_{min}(\mathcal{W}) > {-1+2p\over \pi}.
\ee
In particular, the incoherent state is stable for 
$K\in \left(2\left( g(0)(-1+2p)\right )^{-1}, 2\left( g(0)\pi (2r+p-4pr)\right)^{-1}\right]$.

\subsection{Coupled oscillators on a ring}

A common in applications type of network connectivity can be described as follows. Consider 
$n$ oscillators placed uniformly around a circle. They are labelled by integers
from $1$ to $n$ in the counterclockwise direction. To each potential edge $\{i,j\}\in [n]^2$
we assign a weight
\be\lbl{isotropic}
W_{nij}=G(\xi_{ni}-\xi_{nj}),
\ee
where $G$ is a $1$-periodic even measurable bounded function.

\begin{ex} \lbl{ex.k-neighbor}
Let $X_n=\{1/n,2/n,\dots, 1\}$ and the restriction of the $1$-periodic
even function $G_r$ on $[0,1/2]$ is defined by 
\be\lbl{Kr}
G_r(x) =\left\{\begin{array}{ll} 1, & 0\le x\le r,\\
0, &x>r,
\end{array}
\right.
\ee
where $r\in (0,1/2)$ is a parameter. With this choice of $G:=G_r$, we obtain a 
$k$-nearest-neighbor model, in which each node is connected to $k=\lfloor rn\rfloor$
from each side.
\end{ex}
 
\begin{ex}\lbl{ex.KB}
Another representative example was used by Kuramoto and Battogtokh \cite{KurBat02}.
Here, let $X_n=\{1/n,2/n,\dots, 1\}$ and the restriction 
of the $1$-periodic
even function $G$ to $[0,1/2]$ is defined by 
$G(x):=e^{-\kappa x}$, where $\kappa>0$
is a parameter. With this choice of $G$, we obtain the KM where the strength of interactions
decreases exponentially with the distance between oscillators.
\end{ex}

As in our treatment of the KM on SW graphs in \S\ref{sec.SW}, the integral operator
$\mathcal{W}: L^2(I)\to L^2(I)$ can be written as a convolution
\be\lbl{conv}
\mathcal{W}[f](x)=\int_I G(x-y) f(y) dy,\quad f\in L^2(I).
\ee
It is easy to verify that the eigenvalues of $\mathcal{W}$ are given by 
the Fourier coefficients of $G(x)$.

For instance, for the $k$-nearest-neighbor network
in Example~\ref{ex.k-neighbor}, by setting $p=0$ in \eqref{specR-SW}, for the network
at hand we obtain
$
\zeta_{max}(\mathcal{W})=r
$
and, thus,
$$
K^+_c={1\over \pi g(0) r}.
$$
Note that, in accord with our physical intuition, the transition point is inversely 
proportional to  the coupling range.

For the network in Example~\ref{ex.KB}, eigenvalues are given by
\begin{equation}
\zeta_k := \left\{ \begin{array}{ll}
\displaystyle \frac{2\kappa}{\kappa^2 + 4\pi^2 k^2} (1-e^{-\kappa/2}), & 
(k: \text{even}), \\
\displaystyle \frac{2\kappa}{\kappa^2 + 4\pi^2 k^2} (1+e^{-\kappa/2}), & 
(k: \text{odd}),\quad k=0,1,2,\dots.
\end{array} \right.
\end{equation}
The largest positive eigenvalue is $\zeta_{max} (\mathcal{W}) = \zeta_0$, and we obtain
$$
K^+_c = \frac{\kappa}{\pi g(0)}\frac{1}{1-e^{-\kappa /2}}, \quad K_c^- = -\infty.
$$
This recovers the classical result (\ref{Kc-Kuramoto}) as $\kappa \to 0$.

If the explicit expression for the largest positive eigenvalue of $\mathcal{W}$
is not available, for a network with nonnegative
graphon $W$, the transition point can be estimated using the variational 
characterization of the largest eigenvalue \eqref{Fischer}. Specifically,
from \eqref{Fischer}, we have:
$$
\zeta_{max} (\mathcal{W}) \ge \int_{I^2} W dxdy =\|W\|_{L^1 (I^2)},
$$
and, thus,
$$
K^+_c\le {2\over \pi g(0) \| W \|_{L^1(I^2)}}.
$$

\section{Discussion}\lbl{sec.discuss}
\setcounter{equation}{0}

In this work, we derived and rigorously justified the mean field equation for the 
KM on convergent families of graphs. Our theory covers a large class of coupled 
systems. In particular, it clarifies the mathematical meaning of the mean field 
equation used in the analysis of chimera states (see, e.g., \cite{Ome13}). Moreover,
we show how to write the mean field equation for the KM on many common in 
applications random graphs including Erd\H{o}s-R{\' e}nyi and small-world 
graphs, for which it has not been known before.

We used the  mean field equation to study synchronization in the KM on large deterministic and
random graphs. We derived the transition point formulas for the critical values of the coupling
strength, at which the incoherent state looses stability. The transition point formulas 
show explicit dependence of the stability boundaries of the incoherent state on the 
spectral properties of the limiting graphon. This reveals the precise mechanism by which the 
network topology affects the stability of the incoherent state and the onset of synchronization.
In the follow-up work, we will show that the linear stability analysis of this paper can be extended
to show nonlinear stability of the incoherent state albeit with respect to the weak topology.
There we will also present the bifurcation analysis for the critical values $K^\pm_c$.
The analysis of the KM on small-world networks shows that, unlike in the original KM 
\eqref{classKM}, on graphs the incoherent state may remain stable even for negative 
values of $K$, i.e., for repulsive coupling. In fact, the bifurcations at the left and right
endpoints can be qualitatively different. In the small-world case, the center manifold 
at $K^-_c$ is two-dimensional, whereas it is one-dimensional at $K_c^+$. These 
first findings indicate that the bifurcation structure of the KM on graphs \eqref{KM} 
is richer than that of its classical counterpart \eqref{classKM} and motivates further
investigations of this interesting problem. In the future, we also plan to extend our 
analysis to the KM on certain sparse graphs, including sparse power law networks 
considered in \cite{KVMed16}.

\vskip 0.2cm
\noindent
{\bf Acknowledgements.}
This work was supported in part by the NSF DMS 1412066 (GM).

\vfill
\newpage

%%%%%%%%%%%%%%%%%%%%%%%%%%%%%%%%%%%%%%%%%%%%%%%%%%%%%%%%%%%%%%%%%%%%%%%%%%%%%%%%%%%

 \vfill\newpage
 \bibliographystyle{amsplain}
 \bibliography{knet1}

\def\cprime{$'$} \def\cprime{$'$} \def\cprime{$'$}
\providecommand{\bysame}{\leavevmode\hbox to3em{\hrulefill}\thinspace}
\providecommand{\MR}{\relax\ifhmode\unskip\space\fi MR }
% \MRhref is called by the amsart/book/proc definition of \MR.
\providecommand{\MRhref}[2]{%
  \href{http://www.ams.org/mathscinet-getitem?mr=#1}{#2}
}
\providecommand{\href}[2]{#2}
\begin{thebibliography}{10}

\bibitem{AVR86}
V.S. Afraimovich, N.N Verichev, and M.I Rabinovich, \emph{Stochastic
  synchronization of oscillations in dissipative system}, Izy. Vyssh. Uchebn.
  Zaved. Radiofiz. \textbf{29} (1986).

\bibitem{Bil-ProbMeas}
Patrick Billingsley, \emph{Probability and measure}, third ed., Wiley Series in
  Probability and Mathematical Statistics, John Wiley \& Sons, Inc., New York,
  1995, A Wiley-Interscience Publication. \MR{1324786}

\bibitem{Blekh71}
I.~I. Blekhman, \emph{Sinkhronizatsiya v prirode i tekhnike}, ``Nauka'',
  Moscow, 1971. \MR{628458}

\bibitem{BraHep77}
W.~Braun and K.~Hepp, \emph{The {V}lasov dynamics and its fluctuations in the
  {$1/N$} limit of interacting classical particles}, Comm. Math. Phys.
  \textbf{56} (1977), no.~2, 101--113. \MR{0475547}

\bibitem{Chi15a}
Hayato Chiba, \emph{A proof of the {K}uramoto conjecture for a bifurcation
  structure of the infinite-dimensional {K}uramoto model}, Ergodic Theory
  Dynam. Systems \textbf{35} (2015), no.~3, 762--834. \MR{3334903}

\bibitem{Chi15b}
\bysame, \emph{A spectral theory of linear operators on rigged {H}ilbert spaces
  under analyticity conditions}, Adv. Math. \textbf{273} (2015), 324--379.
  \MR{3311766}

\bibitem{ChiNis2011}
Hayato Chiba and Isao Nishikawa, \emph{Center manifold reduction for large
  populations of globally coupled phase oscillators}, Chaos \textbf{21} (2011),
  no.~4, 043103, 10. \MR{2918832}

\bibitem{Dob79}
R.~L. Dobru{\v{s}}in, \emph{Vlasov equations}, Funktsional. Anal. i Prilozhen.
  \textbf{13} (1979), no.~2, 48--58, 96. \MR{541637}

\bibitem{DorBul12}
F.~Dorfler and F.~Bullo, \emph{Synchronization and transient stability in power
  networks and non-uniform {K}uramoto oscillators}, SICON \textbf{50} (2012),
  no.~3, 1616--1642.

\bibitem{Dud02}
R.~M. Dudley, \emph{Real analysis and probability}, Cambridge Studies in
  Advanced Mathematics, vol.~74, Cambridge University Press, Cambridge, 2002,
  Revised reprint of the 1989 original. \MR{1932358}

\bibitem{Eva-PDE}
Lawrence~C. Evans, \emph{Partial differential equations}, second ed., Graduate
  Studies in Mathematics, vol.~19, American Mathematical Society, Providence,
  RI, 2010. \MR{2597943}

\bibitem{GelVilv4}
I.~M. Gel{\cprime}fand and N.~Ya. Vilenkin, \emph{Generalized functions. {V}ol.
  4}, AMS Chelsea Publishing, Providence, RI, 2016, Applications of harmonic
  analysis, Translated from the 1961 Russian original [ MR0146653] by Amiel
  Feinstein, Reprint of the 1964 English translation [ MR0173945]. \MR{3467631}

\bibitem{Gol16}
Fran{\c{c}}ois Golse, \emph{On the dynamics of large particle systems in the
  mean field limit}, Macroscopic and large scale phenomena: coarse graining,
  mean field limits and ergodicity, Lect. Notes Appl. Math. Mech., vol.~3,
  Springer, [Cham], 2016, pp.~1--144. \MR{3468297}

\bibitem{HenOls16}
Julien~M. Hendrickx and Alex Olshevsky, \emph{On {S}ymmetric {C}ontinuum
  {O}pinion {D}ynamics}, SIAM J. Control Optim. \textbf{54} (2016), no.~5,
  2893--2918. \MR{3562360}

\bibitem{HopIzh97}
Frank~C. Hoppensteadt and Eugene~M. Izhikevich, \emph{Weakly connected neural
  networks}, Applied Mathematical Sciences, vol. 126, Springer-Verlag, New
  York, 1997. \MR{1458890}

\bibitem{KVMed16}
D.~{Kaliuzhnyi-Verbovetskyi} and G.~S. {Medvedev}, \emph{{The semilinear heat
  equation on sparse random graphs}}, ArXiv e-prints (2016).

\bibitem{Kato}
Tosio Kato, \emph{Perturbation theory for linear operators}, Classics in
  Mathematics, Springer-Verlag, Berlin, 1995, Reprint of the 1980 edition.
  \MR{1335452}

\bibitem{KLNAL95}
S.~Yu. Kourtchatov, V.~V. Likhanskii, A.~P. Napartovich, F.~T. Arecchi, and
  A.~Lapucci, \emph{Theory of phase locking of globally coupled laser arrays},
  Phys. Rev. A \textbf{52} (1995), 4089--4094.

\bibitem{KurBat02}
Y.~Kuramoto and D.~Battogtokh, \emph{Coexistence of coherence and incoherence
  in nonlocally coupled phase oscillators}, Nonlinear Phenomena in Complex
  Systems \textbf{5} (2002), 380--385.

\bibitem{Kur75}
Yoshiki Kuramoto, \emph{Self-entrainment of a population of coupled non-linear
  oscillators}, International {S}ymposium on {M}athematical {P}roblems in
  {T}heoretical {P}hysics ({K}yoto {U}niv., {K}yoto, 1975), Springer, Berlin,
  1975, pp.~420--422. Lecture Notes in Phys., 39. \MR{0676492}

\bibitem{Lan05}
C.~Lancellotti, \emph{On the {V}lasov limit for systems of nonlinearly coupled
  oscillators without noise}, Transport Theory Statist. Phys. \textbf{34}
  (2005), no.~7, 523--535. \MR{2265477 (2007f:82098)}

\bibitem{LevHut00}
Ron Levy, William~D. Hutchison, Andres~M. Lozano, and Jonathan~O. Dostrovsky,
  \emph{High-frequency synchronization of neuronal activity in the subthalamic
  nucleus of parkinsonian patients with limb tremor}, Journal of Neuroscience
  \textbf{20} (2000), no.~20, 7766--7775.

\bibitem{LovGraphLim12}
L.~Lov{\'a}sz, \emph{Large networks and graph limits}, AMS, Providence, RI,
  2012.

\bibitem{LovSze06}
L.~Lov{\'a}sz and B.~Szegedy, \emph{Limits of dense graph sequences}, J.
  Combin. Theory Ser. B \textbf{96} (2006), no.~6, 933--957. \MR{2274085
  (2007m:05132)}

\bibitem{Malkin49}
I.~G. Malkin, \emph{Metody {L}yapunova i {P}uankare v teorii neline\u\i nyh
  kolebani\u\i}, OGIZ, Moscow-Leningrad,], 1949. \MR{0035895}

\bibitem{Malkin56}
\bysame, \emph{Some problems of the theory of nonlinear oscillations},
  Gosudarstv. Izdat. Tehn.-Teor. Lit., Moscow, 1956. \MR{0081402}

\bibitem{Med14a}
Georgi~S. Medvedev, \emph{The nonlinear heat equation on dense graphs and graph
  limits}, SIAM J. Math. Anal. \textbf{46} (2014), no.~4, 2743--2766.
  \MR{3238494}

\bibitem{Med14b}
\bysame, \emph{The nonlinear heat equation on {W}-random graphs}, Arch. Ration.
  Mech. Anal. \textbf{212} (2014), no.~3, 781--803. \MR{3187677}

\bibitem{Med14c}
\bysame, \emph{Small-world networks of {K}uramoto oscillators}, Phys. D
  \textbf{266} (2014), 13--22. \MR{3129708}

\bibitem{MotTad14}
Sebastien Motsch and Eitan Tadmor, \emph{Heterophilious dynamics enhances
  consensus}, SIAM Rev. \textbf{56} (2014), no.~4, 577--621. \MR{3274797}

\bibitem{Neu78}
H.~Neunzert, \emph{Mathematical investigations on particle - in - cell
  methods}, vol.~9, 1978, pp.~229--254.

\bibitem{Neu84}
\bysame, \emph{An introduction to the nonlinear {B}oltzmann-{V}lasov equation},
  Kinetic theories and the {B}oltzmann equation ({M}ontecatini, 1981), Lecture
  Notes in Math., vol. 1048, Springer, Berlin, 1984, pp.~60--110. \MR{740721
  (87i:82061)}

\bibitem{Ome13}
O.E. Omelchenko, \emph{Coherence-incoherence patterns in a ring of non-locally
  coupled phase oscillators}, Nonlinearity \textbf{26} (2013), no.~9, 2469.

\bibitem{PG16}
Mason~A. Porter and James~P. Gleeson, \emph{Dynamical systems on networks},
  Frontiers in Applied Dynamical Systems: Reviews and Tutorials, vol.~4,
  Springer, Cham, 2016, A tutorial. \MR{3468887}

\bibitem{RenBeard05}
Wei Ren, Randal~W. Beard, and Timothy~W. McLain, \emph{Coordination variables
  and consensus building in multiple vehicle systems}, Cooperative control,
  Lecture Notes in Control and Inform. Sci., vol. 309, Springer, Berlin, 2005,
  pp.~171--188. \MR{2106896}

\bibitem{Singer93}
W.~Singer, \emph{Synchronization of cortical activity and its putative role in
  information processing and learning}, Annual Review of Physiology \textbf{55}
  (1993), no.~1, 349--374.

\bibitem{SteinWeiss}
Elias~M. Stein and Guido Weiss, \emph{Introduction to {F}ourier analysis on
  {E}uclidean spaces}, Princeton University Press, Princeton, N.J., 1971,
  Princeton Mathematical Series, No. 32. \MR{0304972}

\bibitem{Str-Sync}
Steven Strogatz, \emph{Sync}, Hyperion Books, New York, 2003, How order emerges
  from chaos in the universe, nature, and daily life. \MR{2394754}

\bibitem{Str00}
Steven~H. Strogatz, \emph{From {K}uramoto to {C}rawford: exploring the onset of
  synchronization in populations of coupled oscillators}, Phys. D \textbf{143}
  (2000), no.~1-4, 1--20, Bifurcations, patterns and symmetry. \MR{1783382}

\bibitem{StrMir91}
Steven~H. Strogatz and Renato~E. Mirollo, \emph{Stability of incoherence in a
  population of coupled oscillators}, J. Statist. Phys. \textbf{63} (1991),
  no.~3-4, 613--635. \MR{1115806}

\bibitem{StrMir92}
Steven~H. Strogatz, Renato~E. Mirollo, and Paul~C. Matthews, \emph{Coupled
  nonlinear oscillators below the synchronization threshold: relaxation by
  generalized {L}andau damping}, Phys. Rev. Lett. \textbf{68} (1992), no.~18,
  2730--2733. \MR{1160290}

\bibitem{Sze2011}
Bal{\'a}zs Szegedy, \emph{Limits of kernel operators and the spectral
  regularity lemma}, European J. Combin. \textbf{32} (2011), no.~7, 1156--1167.
  \MR{2825541}

\bibitem{TraWhi10}
Roger~D. Traub, Miles~A. Whittington, and Mark~O. Cunningham, \emph{Epileptic
  fast oscillations and synchrony in vitro}, Epilepsia \textbf{51} (2010),
  28--28.

\bibitem{WatStr94}
S.~Watanabe and S.H. Strogatz, \emph{Constants of motion for superconducting
  {J}osephson arrays}, Physica D: Nonlinear Phenomena \textbf{74} (1994),
  no.~3–4, 197 -- 253.

\bibitem{WatStr98}
D.J. Watts and S.H. Strogatz, \emph{Collective dynamics of small-world
  networks}, Nature \textbf{393} (1998), 440--442.

\bibitem{Young-Hilbert}
Nicholas Young, \emph{An introduction to {H}ilbert space}, Cambridge
  Mathematical Textbooks, Cambridge University Press, Cambridge, 1988.
  \MR{949693}

\end{thebibliography}

\end{document}